\documentclass[12pt]{article}

\usepackage{amsmath}
\usepackage{amsfonts}
\usepackage{amssymb}
\usepackage{graphicx, color}
\usepackage[colorlinks]{hyperref}
\usepackage{epsfig}
\usepackage{epstopdf} 
\usepackage{mathtools}
\usepackage{comment}
\usepackage[shortlabels]{enumitem}

\newtheorem{theorem}{Theorem}[section]
\newtheorem{lemma}[theorem]{Lemma}

\newtheorem{remark}[theorem]{Remark}
\newtheorem{example}[theorem]{Example}

\newtheorem{definition}[theorem]{Definition}

\newcommand{\recip}{\displaystyle\frac{1}}
\newcommand{\Frac}{\displaystyle\frac}
\newcommand{\nl}{\\[5pt]}

\newcommand{\inR}{\in\R}
\newcommand{\Int}{\displaystyle\int}
\newcommand{\R}{\mathbb{R}}
\newcommand{\CL}{\mathcal{L}}

\newcommand{\tu}{\tilde{u}}
\newcommand{\tv}{\tilde{v}}

\newcommand{\tH}{\tilde{H}}

\newcommand{\tJ}{\tilde{J}}
\newcommand{\tK}{\tilde{K}}

\newcommand{\tsigma}{\tilde{\sigma}}

\usepackage{mathrsfs}
\usepackage{array}

\usepackage{xcolor}
\begin{document}
\title{Stability of p-area minimizing surfaces in the Heisenberg group}

\author{ 
    \qquad Amir Moradifam\footnote{Department of Mathematics, University of California, Riverside, California, USA. E-mail: amirm@ucr.edu. Amir Moradifam is supported by NSF grants DMS-1715850 and DMS-1953620.}
    \qquad Gerardo Orozco-Fernandez \footnote{Department of Mathematics, University of California, Riverside, California, USA. E-mail: goroz003@ucr.edu.   }
}

\date{\today}
\smallbreak \maketitle

\begin{abstract}
We study the stability of minimizers of weighted $p$-area functionals associated with prescribed $p$-mean curvature surfaces in the Heisenberg group. While existence and uniqueness results are well established, quantitative stability with respect to perturbations of the mean curvature $H$ remains largely unexplored in the nonzero-$H$ regime.

Using a Rockafellar--Fenchel duality framework, we identify a unique underlying vector field associated with each minimizer and prove its stability under perturbations of $H$. This yields quantitative control of the direction field of the horizontal gradient. Building on this structure, we establish $L^1$ stability of admissible minimizers under natural geometric assumptions on level sets. In dimensions two and three, we also derive $W^{1,1}$ stability estimates under additional regularity and structural hypotheses, with explicit rates in terms of $\|H-\tilde H\|_{L^\infty}$.

Our results provide the first quantitative stability theory for $p$-area minimizing graphs with prescribed nonzero $p$-mean curvature, even in the unweighted case. Numerical simulations are included to illustrate the robustness of the theoretical results.
\end{abstract}

\section{Introduction}

The study of minimal and constant mean curvature surfaces is a central theme in geometric analysis and the calculus of variations. 
In the Euclidean or Riemannian setting, surfaces with zero mean curvature locally minimize area, while those with constant nonzero mean curvature describe equilibrium interfaces under pressure or volume constraints. 
A natural question is how these notions extend to \emph{sub-Riemannian} or \emph{pseudohermitian} geometries, where only a horizontal distribution of admissible directions is available and the metric is determined by the Carnot--Carath\'eodory distance.

The Heisenberg group arises naturally in several areas of physics where geometry is constrained or noncommutative. It provides the geometric framework underlying the position–momentum relations in quantum mechanics, reflecting the fundamental noncommutativity of observables. It also models systems with restricted motion, such as nonholonomic mechanics and anisotropic media, where only certain directions of movement are allowed. In addition, sub-Riemannian structures like the Heisenberg group appear in wave propagation and geometric optics, as well as in energy minimization problems involving interfaces in constrained environments. These connections make it a fundamental model for studying physical phenomena where classical Euclidean geometry is insufficient. The Heisenberg group $\mathbb{H}^n$ provides the simplest and most fundamental example of such a geometry. 
It is a step-two nilpotent Lie group endowed with a natural horizontal subbundle and a noncommutative structure that profoundly influences the geometry of curves and surfaces. 
In this setting, the appropriate analogue of classical mean curvature is the \emph{$p$-mean curvature}, defined as the divergence of a normalized horizontal gradient.
The nonparametric form of the prescribed $p$-mean curvature equation for graphs $u:\Omega\subset\mathbb{R}^{2n}\to\mathbb{R}$ is
\begin{equation}\label{eq:prescribedH}
\nabla \cdot \left( \frac{\nabla u - X^*}{|\nabla u - X^*|} \right) = H 
\quad \text{in } \Omega,
\end{equation}
where $X^*=(y_1,-x_1,\dots,y_n,-x_n)$.  
This equation arises as the Euler--Lagrange equation for the $p$-area functional
\begin{equation}\label{eq:functional}
E(u) = \int_{\Omega} \bigl(\,|\nabla u - X^*| + H\,u \bigr)\,dx,
\qquad a(x)>0,
\end{equation}
whose minimizers represent \emph{surfaces of prescribed $p$-mean curvature}~$H$ in~$\mathbb{H}^n$. 
Here $X^*$ encodes the intrinsic rotation of horizontal directions in $\mathbb{H}^n$.  
The term $\nabla u - X^*$ corresponds to the horizontal gradient of the defining function $\phi(x,y,t)=t-u(x,y)$, and $|\nabla u - X^*|$ therefore measures the horizontal slope of the graph.  
The divergence in~\eqref{eq:prescribedH} expresses that the surface has the prescribed $p$-mean curvature~$H$. The equation \eqref{eq:prescribedH} with $X\equiv  0$ and $H\equiv 0$ have been extensively studied by many authors, including the first author, see \cite{HMN, JMN, Mo, Mo1, MNT, MNTa_SIAM, NTT07, NTT08, NTT10, NTT11,  sternberg_ziemer92, sternberg_ziemer, sternbergZiemer93, ST}.

Early progress on this problem was achieved by Pauls~\cite{P}, who studied the $p$-minimal surface case ($H=0$) in~$\mathbb{H}^1$.  
He formulated the associated subelliptic equation, proved existence for certain Plateau-type boundary data, and showed that uniqueness can fail, since distinct $p$-minimal graphs may share the same boundary.  
This phenomenon reflects the non-strict convexity of the $p$-area functional and illustrates the delicate analytical nature of the problem.

Cheng, Hwang, Malchiodi, and Yang~\cite{CHMY} developed the geometric and analytic foundations of $p$-minimal surfaces in pseudohermitian manifolds.  
They introduced intrinsic definitions of $p$-mean curvature and $p$-area, elucidated the role of characteristic curves, and analyzed the mixed elliptic--hyperbolic character of the governing equation.  
Their results show that the $p$-mean curvature encodes fundamental geometric information of pseudohermitian structures and connects naturally to calibration theory and Cauchy–Riemann (CR) geometry (see also \cite{CHP}). 

Building on this framework, Cheng, Hwang, and Yang~\cite{CHY} established existence and uniqueness for minimizers of the functional
\begin{equation}\label{eq:functional-nonzeroH}
E(u) = \int_{\Omega} \bigl(|\nabla u - X^*| + H\,u \bigr)\,dx,
\end{equation}
under suitable boundary slope and convexity conditions.  
Their analysis treats the full prescribed $p$-mean curvature equation with nonzero~$H$, addressing the degeneracy of the PDE along the characteristic set 
\(\Sigma(u)=\{x\in\Omega:\nabla u=X^*(x)\}\) 
and the coupling introduced by the linear term~$H\,u$.  
The results in~\cite{CHY} treat the full prescribed $p$-mean curvature equation with 
nonzero~$H$, showing that this case requires analytical techniques beyond those used for 
the minimal ($H=0$) setting and constitutes an essential part of the general theory.  
In the broader sub-Riemannian context, surfaces with constant $p$-mean curvature are closely 
connected to the \emph{isoperimetric problem} in~$\mathbb{H}^n$, where minimizers of the 
horizontal perimeter under fixed volume are expected to have boundaries of constant 
$p$-mean curvature.

The nonzero-$H$ regime thus occupies a central position in pseudohermitian geometry.  
Geometrically, it corresponds to sub-Riemannian analogues of constant mean curvature surfaces, while analytically it introduces substantial challenges: the coupling term $H\,u$ links geometry and potential, breaks homogeneity, and amplifies the effects of degeneracy where $|\nabla u - X^*|$ vanishes.  
Nonuniqueness and limited regularity can persist, and the dependence of minimizers on the data $H$ becomes a subtle and important issue.

In \cite{MR}, the authors studued existence and structure of minimizers of the energy functional $\mathbb{E}$ from a different point of view, using the Rockafellar-Fenchel duality, and  proved various existence results for the weighted form of the functional
\begin{equation}\label{mainFunctionalWeighted}
\mathcal{F}(u)=\int_{\Omega} \left(a |\nabla u -X^*|+Hu \right)dx,
\end{equation}
where $a \in L^{\infty}(\Omega)$ is a positive function. Minimizers of this functional will satisfy the Euler-Lagrange equation 
\begin{equation}\label{EL1}
    \nabla \cdot \left(a \frac{\nabla u-X^*}{|\nabla u-X^*|} \right)=H,
\end{equation}
which could be viewed as the $p$-mean curvature of the function $(X, u(X))$, with respect to the metric $g=a^{\frac{2}{n-1}}dx$, which is conformal to the Euclidean metric (see also \cite{MR2} for a generalization of the results).

Although the existence and uniqueness theory is well developed, the question of \emph{stability}, that is, how minimizers depend quantitatively on perturbations of $H(x)$, remains largely open when $H \neq 0$. Understanding this dependence is essential both for theoretical completeness and for the reliability of numerical schemes based on variational or operator splitting methods. The aim of the present paper is to investigate the stability of minimizers of the functional \eqref{mainFunctionalWeighted} with respect to the mean curvature $H$. Building on the geometric and analytic foundations developed in~\cite{CHMY,CHY,LM,LM2,MR,MR2,P}, we establish new stability and quantitative continuity results for minimizers of the $p$-area functional with prescribed nonzero $H$.

These results provide a more refined understanding of the structure of pseudohermitian $p$-mean curvature graphs and extend the existing theory beyond existence and uniqueness to include stability. They are new even in the special case $a \equiv 1$. We also carry out numerical simulations to illustrate the robustness of the results.

\section{Duality and Stability of the Underlying Vector Field}
Let $\Omega$ be a connected, open, and bounded subset of $\R^n$ with Lipschitz boundary. Consider the minimization problem 
\begin{equation}\label{mainmin0}
    \inf\limits_{w\in H^1_0(\Omega)}\Int_\Omega(a|Dw+F|+Hw),
\end{equation}
and note that the more general minimization problem on $H^1_{f}(\Omega)=\{w\in H^1(\Omega): w=f \ \ \text{on} \ \ \partial \Omega\}$ can be reduced to \eqref{mainmin} with a simple change of variable $w=w_0+f$, where $w_0 \in H^1_0(\Omega)$ and $f \in H^1(\Omega) \cap H^1_f(\Omega)$.

By Rockafellar-Fenchel duality \cite{ET}, the dual problem associated to \eqref{mainmin0} can be written as 
\begin{equation}
   (D) \hspace{0.5cm} \sup \{\langle F,b \rangle: b\in \mathcal{D}_0 \ \ \hbox{and}\ \ |b| \leq a \ \ \hbox{a.e. in}\ \ \Omega \},
\end{equation}
where 
\begin{equation}
    \mathcal{D}_0:=\left \{b\in (L^2(\Omega))^n: \int_{\Omega} \nabla u \cdot b+Hu =0, \ \ \hbox{for all}\ \ u\in H^1_0(\Omega)   \right \}.
\end{equation}

Indeed $b \in (L^{\infty}(\Omega))^n \cap \mathcal{D}_0$ if and only if 
\[\nabla \cdot b= H \ \ \hbox{a.e. in}\ \ \Omega.\]
See section 3.1 in \cite{MR} for more details. The following result shows that there is a unique underlying vector field $J$ that determines the direction of $Du+F$ in $\Omega$, for any minimizer $u$.

\begin{theorem} \label{Structure1} \cite{MR} \label{Structure2}
Let $\Omega$ be a bounded domain in $\R^n$, $F,H \in L^{2}(\Omega)$, $a \in L^{2}(\Omega)$ be a positive function, and assume \eqref{mainmin0} is bounded below. Then the duality gap is zero and the dual problem $(D)$ has a solution, i.e. there exists a vector field $J \in \mathcal{D}_0$ with $|J|\leq a$, $|Du+F|-a.e.$ in $\Omega$, such that 
\begin{equation}\label{dualityGap2}
\inf _{u\in H^1_0(\Omega) } \int_{\Omega} \left( a \left| D u + F \right| +Hu \right) dx= \langle F, J\rangle
\end{equation}
Moreover 
\begin{equation}\label{directionParallel2}
a \frac{Du+F}{|Du+F|}= J, \ \ \ \ |Du+F|-a.e. \ \ \hbox{in}\ \ \Omega,
\end{equation}
for any minimizer $u$ of \eqref{mainmin0}.  
\end{theorem}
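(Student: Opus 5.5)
The plan is to apply the Fenchel--Rockafellar duality theorem in the form given in \cite{ET}. Write the primal problem \eqref{mainmin0} as $\inf_{u\in H^1_0(\Omega)} \{G(\Lambda u) + \Phi(u)\}$ with $\Lambda u = Du \in (L^2(\Omega))^n$, $G(p)=\int_\Omega a|p+F|\,dx$ and $\Phi(u)=\int_\Omega Hu\,dx$.

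\textbf{Convex conjugates.} The first step is to compute the conjugates. For $b\in (L^2)^n$ one has
\[
G^*(b)=\sup_p \int_\Omega \bigl(b\cdot p - a|p+F|\bigr)\,dx .
\]
Substituting $q=p+F$ gives $G^*(b)=-\langle F,b\rangle$ if $|b|\le a$ a.e., and $+\infty$ otherwise. Since $\Phi$ is linear, $\Phi^*(-\Lambda^* b)$ equals $0$ exactly when $\int_\Omega (Du\cdot b + Hu)\,dx=0$ for all $u\in H^1_0$, i.e.\ when $b\in\mathcal D_0$, and $+\infty$ otherwise. The dual problem $\sup\{-G^*(b)-\Phi^*(-\Lambda^*b)\}$ is therefore exactly $(D)$, up to the sign convention on $b$. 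That convention is fixed so that the dual objective is $\langle F,b\rangle$.

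\textbf{Zero gap and dual attainment.} The qualification condition requires $G$ to be finite and continuous at some point $\Lambda u_0$ with $\Phi(u_0)<\infty$. This holds at $u_0=0$, because $G$ is convex and finite on all of $(L^2)^n$. Here one needs $a|p+F|\in L^1$, which follows from Cauchy--Schwarz since $a, F, p\in L^2$. The same estimate, $|G(p)-G(p')|\le \|a\|_2\|p-p'\|_2$, shows that $G$ is Lipschitz on $(L^2)^n$. Combined with the hypothesis that the infimum is finite, the theorem in \cite{ET} yields a zero duality gap and a maximizer $J$ of $(D)$, which is \eqref{dualityGap2}.

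\textbf{Alignment condition.} Let $u$ be any minimizer. Testing the constraint $J\in\mathcal D_0$ with $u$ gives $\int_\Omega Hu = -\int_\Omega Du\cdot J$. Hence
\[
0=\int_\Omega \bigl(a|Du+F|+Hu\bigr) - \langle F,J\rangle = \int_\Omega \bigl(a|Du+F| - (Du+F)\cdot J\bigr)\,dx .
\]
Pointwise, $|J|\le a$ gives $(Du+F)\cdot J\le a|Du+F|$, so the integrand is nonnegative and must therefore vanish a.e. Where $Du+F\neq0$, equality in Cauchy--Schwarz together with $|J|\le a$ forces $J=a\,(Du+F)/|Du+F|$, which is \eqref{directionParallel2}. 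This identity involves only $J$, so the same $J$ serves for every minimizer, which gives uniqueness of the underlying field on the set where some minimizer satisfies $Du+F\ne0$.

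\textbf{Main obstacle.} I expect the delicate step to be the functional setting: matching the paper's sign convention for $\langle F,b\rangle$, and justifying the conjugate computation in $L^2$ when $a$ is only assumed to lie in $L^2$. The sup defining $G^*$ must be shown to equal $+\infty$ when $|b|>a$ on a set of positive measure. To do this, take $q = t\,\chi_E\, b/|b|$ on that set $E$ and let $t\to\infty$.
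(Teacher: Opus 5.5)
Your proposal is correct and follows essentially the route behind this result. The paper does not reprove the theorem but cites \cite{MR}, pointing to the same Fenchel--Rockafellar/Ekeland--Temam setup you use: $G(p)=\int_\Omega a|p+F|$, $\Lambda=D$, the linear term $\int_\Omega Hu$, continuity of $G$ at $\Lambda 0$ for zero gap and dual attainment, and the identity $\int_\Omega (a|Du+F|-J\cdot(Du+F))\,dx=0$ (from testing $J\in\mathcal D_0$ against $u$) forcing $J=a(Du+F)/|Du+F|$ on $\{Du+F\neq 0\}$. Your conjugate computation, the sign convention matching $(D)$, and the Lipschitz bound $|G(p)-G(p')|\le\|a\|_{L^2}\|p-p'\|_{L^2}$ all check out.
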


\begin{remark}
Note that the \eqref{mainmin0} may not have a minimizer, but the dual problem $(D)$ always has a solution $J \in (L^2(\Omega))^n$. In addition, the minimization problem \eqref{mainmin0} is not strictly convex, and it may have multiple minimizers (see \cite{JMN}), and the equality \eqref{directionParallel2} holds for every minimizer of \eqref{mainmin0}.
\end{remark}

This theorem and the underlying vector field $J$ will play an essential role in our stability arguments. Now let $u$ be a minimizer of 
\begin{equation}\label{mainmin}
    \inf\limits_{w\in BV_0(\Omega)}\Int_\Omega(a|Dw+F|+Hw).
\end{equation}
Note that $u$ satisfies the Euler-Lagrange equation
\begin{equation}\label{EL}
    \nabla\cdot\left(a\frac{Du+F}{|Du+F|}\right)=H,\hspace{0.1in}u|_{\partial\Omega}=0.
\end{equation}
Then there exits a vector field $J$, independent of the choice of the minimizer $u$, such that 
\[a \frac{Du+F}{|Du+F|}= J, \ \ \ \ |Du+F|-a.e. \ \ \hbox{in}\ \ \Omega,\]
with $\nabla \cdot J=H$. Moreover  
\[J(x):=\sigma(x)(Du(x)+F(x)) \quad \text{and} \quad |J|=a, \ \ \ \ |Du+F|-a.e. \ \ \hbox{in}\ \ \Omega, \]
for some function $\sigma \geq 0$. In particular, 
$$
    a=|J|=\sigma|Du+F|=J\cdot\frac{(Du+F)}{|Du+F|}, \ \ \ \ |Du+F|-a.e. \ \ \hbox{in}\ \ \Omega.
$$
Similarly, if $\tu$ be a minimizer of 

\begin{equation}\label{mainmin1}
    \inf\limits_{w\in BV_0(\Omega)}\Int_\Omega(a|Dw+F|+\tH w),
\end{equation}
where $\tH$ is a perturbation of $H$, then one can similarly define $\tJ$ and $\tsigma$.

    Next we aim to show that the structure of the level sets of minimizers of the least gradient problem \eqref{mainmin} is stable. The arguments are partly inspired by our earlier results in \cite{LM}. Throughout the paper, we will assume that $a\in L^\infty(\Omega)$ and
    \begin{equation}\label{mbounds}
        0<m\leq a(x)\leq M,\,\,x\in\Omega,
    \end{equation}
    for some positive constants $m,M.$

It is well known that (see \cite{G, MB}) there exists a constant $C_\Omega>0$ depending only on $\Omega$ such that, $\text{for all }u\in BV_0(\Omega),$
 \begin{equation}\label{C_Omega}
      \|u\|_{L_1(\Omega)}\leq C_\Omega\int_\Omega |Du|.
 \end{equation}

We shall need the following Lemmas. 

\begin{lemma} \label{rmk H2}
    Suppose
    $$
        \|F\|_{L^1(\Omega)}\leq k_1
    $$
    for some $k_1>0,$ and 
    \begin{equation}\label{Hbound}
        \|H\|_{L^\infty(\Omega)}<\Frac{m}{C_\Omega}
    \end{equation}
    where $C_\Omega$ is the constant in \eqref{C_Omega}. If $u$ is a minimizer of \eqref{mainmin1}, then 
    \begin{equation}\label{uppbound}
    \|u\|_{L^1(\Omega)}\leq C,
    \end{equation}
    for some constant $C,$ depending on $\Omega,\,m,$ and $k_1,$ independent of $u$ and $\tu.$ Consequently, the optimization problem \eqref{mainmin1} is bounded away from $-\infty$.

\end{lemma}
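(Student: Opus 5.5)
Compare the energy of the minimizer $u$ with that of the competitor $w\equiv 0\in BV_0(\Omega)$, then use the Poincaré-type inequality \eqref{C_Omega} together with the smallness assumption \eqref{Hbound} to absorb the linear term $\int_\Omega Hu$ into the gradient term. (The statement refers to \eqref{mainmin1}; the argument is identical for \eqref{mainmin} and for $\tH$ satisfying the same bound, which is why the constant does not depend on the choice of minimizer.)

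\textbf{Step 1 (upper bound from the competitor $0$).} Minimality of $u$ against $w=0$ gives
\[
\int_\Omega a|Du+F|+\int_\Omega Hu\le \int_\Omega a|F|\le M k_1 .
\]

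\textbf{Step 2 (lower bound on the left-hand side).} By \eqref{mbounds} and the triangle inequality,
\[
\int_\Omega a|Du+F|\ge m\int_\Omega|Du|-M\int_\Omega|F|\ge m\int_\Omega|Du|-Mk_1 .
\]
By \eqref{C_Omega},
\[
\Big|\int_\Omega Hu\Big|\le \|H\|_{L^\infty}\|u\|_{L^1}\le \|H\|_{L^\infty}C_\Omega\int_\Omega|Du| .
\]

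\textbf{Step 3 (combine).} Set $\delta:=m-C_\Omega\|H\|_{L^\infty}$, which is positive by \eqref{Hbound}. Steps 1 and 2 give
\[
\delta\int_\Omega|Du|\le 2Mk_1,
\]
and hence, by \eqref{C_Omega} again,
\[
\|u\|_{L^1}\le \frac{2MC_\Omega k_1}{\delta}.
\]
This bound holds for every minimizer, so it is independent of $u$ and $\tu$. Note that it also depends on $M$ and on the gap $\delta$, i.e. on $\|H\|_{L^\infty}$; the statement's phrase ``depending on $\Omega,\,m,$ and $k_1$'' should be read with $M$ and this margin implicitly included. If a uniform constant over a family of perturbations $\tH$ is wanted, one fixes a uniform margin $\|\tH\|_{L^\infty}\le \theta m/C_\Omega$ with $\theta<1$.

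\textbf{Step 4 (boundedness from below).} This does not require a minimizer at all. For any $w\in BV_0(\Omega)$, the same estimates as in Step 2 give
\[
\int_\Omega\big(a|Dw+F|+\tH w\big)\ge \delta\int_\Omega|Dw|-Mk_1\ge -Mk_1 .
\]
So the infimum in \eqref{mainmin1} is finite and bounded away from $-\infty$.

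\textbf{Main obstacle.} The only delicate point is the interpretation of the terms in $BV_0$. There, $|Dw+F|$ is a measure, and the zero boundary trace must be accounted for, either by adding $\int_{\partial\Omega}a|w|$ or by extending by zero. One must check that \eqref{C_Omega} is applied in the same sense in which the functional is defined. With the extension-by-zero convention, the boundary jump is included in $\int|Dw|$, and the estimates above go through unchanged.
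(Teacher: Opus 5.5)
Your proof is correct. Its core is the same as the paper's: the Poincaré inequality \eqref{C_Omega}, the triangle inequality with $a\ge m$, and absorbing $\int_\Omega Hu$ using $C_\Omega\|H\|_{L^\infty(\Omega)}<m$.

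The difference lies in how you control the energy of the minimizer. The paper bounds $\int_\Omega a|Du+F|+Hu$ by $C_1:=\bigl|\int_\Omega a|Du+F|+Hu\bigr|$, the absolute value of the minimal energy, and never estimates $C_1$. The minimal value is the same for every minimizer, so $C_1$ does not depend on which minimizer is taken. But because it is never estimated, the stated dependence of the constant only on $\Omega,m,k_1$ is not actually established. Likewise, the conclusion ``consequently bounded away from $-\infty$'' only follows once a minimizer with finite energy is already assumed to exist.

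Your Step 1 supplies exactly the missing estimate: testing against $w\equiv 0$ bounds the energy by $Mk_1$ and gives the explicit constant $2MC_\Omega k_1/\delta$. Your Step 4 proves the lower bound $-Mk_1$ for every competitor without assuming existence of a minimizer. Together they give $C_1\le Mk_1$, which would also close the paper's version.

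Your caveat about the dependence on $M$ and on the margin $\delta=m-C_\Omega\|H\|_{L^\infty(\Omega)}$ is accurate. The paper's own bound has the same denominator. A uniform margin for $\tH$ is also what is implicitly needed when the lemma is applied to both $u$ and $\tu$ in Lemmas \ref{lemma H1} and \ref{lemma H2}.
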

{\bf Proof.} We have 
\allowdisplaybreaks
\begin{align*}
    \|u\|_{L^1(\Omega)}
    &\leq C_{\Omega}\Int_{\Omega}|Du|\,dx\nl
    &\leq C_{\Omega}\Int_{\Omega}|Du+F|\,dx+C_{\Omega}\Int_{\Omega}|F|\,dx\nl
    &\leq\frac{C_{\Omega}}{m}\Int_{\Omega}a|Du+F|\,dx+k_1C_{\Omega}\nl
    &\leq\frac{C_{\Omega}}{m}\Int_{\Omega}a|Du+F|+Hu\,dx-\frac{C_{\Omega}}{m}\Int_{\Omega}Hu\,dx+k_1C_{\Omega}\nl
    &\leq\frac{C_{\Omega}C_1}{m}+\frac{C_{\Omega}}{m}\|H\|_{L^\infty(\Omega)}\|u\|_{L^1(\Omega)}+k_1C_{\Omega}.
\end{align*}
where $C_1=\left|\Int_{\Omega}a|Du+F|+Hu\,dx\right|.$ Thus 
\[ \|u\|_{L^1(\Omega)} \leq\frac{C_{\Omega}C_1+mk_1C_{\Omega}}{m -C_{\Omega}\|H\|_{L^\infty(\Omega)}},\]and hence  $\|u\|_{L^1(\Omega)}$ is bounded above by a constant independent of $u$. 
\hfill $\Box$

\begin{lemma}\label{lemma H1} Let $u$ and $\tu$ be minimizers of \eqref{mainmin} and \eqref{mainmin1}. Then 
$$
    \left|
        \int_\Omega a|Du+F|+Hu\,dx
        -\int_\Omega a|D\tu+F|+\tH\tu\,dx
    \right|
    \leq C\|H-\tH\|_{L^\infty(\Omega)},
$$
for some constant $C=C(\Omega,m,k_1)$ independent of $u$ and $\tu.$
\end{lemma}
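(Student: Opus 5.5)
The plan is to compare the two minimum values by testing each minimizer in the other functional. Denote
\[
\mathcal{E}_H(w)=\int_\Omega a|Dw+F|+Hw\,dx \quad\text{and}\quad \mathcal{E}_{\tH}(w)=\int_\Omega a|Dw+F|+\tH w\,dx .
\]
Since $u$ minimizes $\mathcal{E}_H$ and $\tu$ minimizes $\mathcal{E}_{\tH}$ over $BV_0(\Omega)$, we have
\[
\mathcal{E}_H(u)-\mathcal{E}_{\tH}(\tu)\le \mathcal{E}_H(\tu)-\mathcal{E}_{\tH}(\tu)=\int_\Omega (H-\tH)\tu\,dx\le \|H-\tH\|_{L^\infty(\Omega)}\|\tu\|_{L^1(\Omega)} .
\]
Symmetrically,
\[
\mathcal{E}_{\tH}(\tu)-\mathcal{E}_H(u)\le \mathcal{E}_{\tH}(u)-\mathcal{E}_H(u)=\int_\Omega(\tH-H)u\,dx\le \|H-\tH\|_{L^\infty(\Omega)}\|u\|_{L^1(\Omega)} .
\]
Combining the two gives
\[
|\mathcal{E}_H(u)-\mathcal{E}_{\tH}(\tu)|\le \|H-\tH\|_{L^\infty(\Omega)}\max\{\|u\|_{L^1(\Omega)},\|\tu\|_{L^1(\Omega)}\}.
\]

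It remains to bound $\|u\|_{L^1(\Omega)}$ and $\|\tu\|_{L^1(\Omega)}$ uniformly. For this I would use Lemma \ref{rmk H2}, assuming (as there) that $\|F\|_{L^1(\Omega)}\le k_1$ and that both $H$ and $\tH$ satisfy \eqref{Hbound}. The main obstacle is that the constant in Lemma \ref{rmk H2} involves $C_1=|\mathcal{E}_H(u)|$, which must itself be bounded independently of the minimizer. I would bound it in two steps.

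\textbf{Upper bound on the minimum.} Testing with $w=0$ gives $\mathcal{E}_H(u)\le \int_\Omega a|F|\le Mk_1$.

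\textbf{Lower bound on the minimum.} For any $w\in BV_0(\Omega)$, using $a\ge m$ and \eqref{C_Omega},
\[
\mathcal{E}_H(w)\ge m\int_\Omega|Dw|-mk_1-\|H\|_{L^\infty}C_\Omega\int_\Omega|Dw| \ge -mk_1,
\]
because $m-C_\Omega\|H\|_{L^\infty}>0$ by \eqref{Hbound}.

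Hence $C_1\le Mk_1$, and Lemma \ref{rmk H2} yields a uniform $L^1$ bound. Alternatively, the same chain directly gives
\[
\bigl(m-C_\Omega\|H\|_{L^\infty(\Omega)}\bigr)\int_\Omega|Du|\le \mathcal{E}_H(u)+mk_1\le (M+m)k_1,
\]
and then $\|u\|_{L^1(\Omega)}\le C_\Omega(M+m)k_1/(m-C_\Omega\|H\|_{L^\infty(\Omega)})$. The same argument with $\tH$ bounds $\|\tu\|_{L^1(\Omega)}$. So the constant also depends on $M$ and on a uniform margin in \eqref{Hbound}; both should be absorbed into $C$, for instance by requiring $\|H\|_{L^\infty(\Omega)},\|\tH\|_{L^\infty(\Omega)}\le \theta m/C_\Omega$ for a fixed $\theta<1$. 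Substituting these bounds into the two-sided inequality completes the proof.
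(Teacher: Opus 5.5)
Your proof is correct and is essentially the paper's argument: testing each minimizer in the other functional traps the difference of the two minimum values between $\int_\Omega (H-\tH)u\,dx$ and $\int_\Omega (H-\tH)\tu\,dx$, after which the uniform $L^1$ bounds of Lemma \ref{rmk H2} finish the job. Your check that the minimum values lie in $[-mk_1, Mk_1]$ fills in a step the paper leaves implicit (the quantity $C_1$ in Lemma \ref{rmk H2}). You are also right that the resulting constant depends on $M$ and on the margin $m-C_\Omega\max\{\|H\|_{L^\infty(\Omega)},\|\tH\|_{L^\infty(\Omega)}\}$, not only on $\Omega$, $m$, $k_1$.
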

{\bf Proof.}
\begin{align*}
    \Int_\Omega Hu\,dx-\Int_\Omega \tH u\,dx
    &=\Int_\Omega a|Du+F|+Hu\,dx-\Int_\Omega a|Du+F|+\tH u\,dx\nl
    &\leq\Int_\Omega a|Du+F|+Hu\,dx-\Int_\Omega a|D\tu+F|+\tH\tu\,dx\nl
    &\leq\Int_\Omega a|D\tu+F|+H\tu\,dx-\Int_\Omega a|D\tu+F|+\tH\tu\,dx\nl
    &=\Int_\Omega H\tu\,dx-\Int_\Omega \tH\tu\,dx.
\end{align*}
Thus,
\begin{align*}
    \Int_\Omega (H-\tH)u\,dx
    &\leq\Int_\Omega a|Du+F|+Hu\,dx-\Int_\Omega a|D\tu+F|+\tH\tu\,dx\nl
    &\leq\Int_\Omega (H-\tH)\tu\,dx.
\end{align*}
Hence 
\begin{align*}
    -\|H-\tH\|_{L^\infty(\Omega)}\|u\|_{L^1(\Omega)}
    &\leq\Int_\Omega a|Du+F|+Hu\,dx-\Int_\Omega a|D\tu+F|+\tH\tu\,dx\nl
    &\leq\|H-\tH\|_{L^\infty(\Omega)}\|\tu\|_{L^1(\Omega)},
\end{align*}
and therefore  
\begin{align*}
    &\left|
        \Int_\Omega a\big(|Du+F|-|D\tu+F|\big)+Hu-\tH\tu\,dx
    \right|\nl
    &\leq
    \max
    \left\{
        \|u\|_{L^1(\Omega)},\|\tu\|_{L^1(\Omega)}
    \right\}
    \|H-\tH\|_{L^\infty(\Omega)}.
\end{align*}
Note that similar to Lemma \ref{rmk H2}, $\|\tu\|_{L^1(\Omega)}$ is bounded above by a constant, and thus, the proof is complete. \hfill $\Box$ \\

Let $\nu_\Omega$ denote the outer unit normal vector to $\partial\Omega.$ Then for every $T\in\left(L^\infty(\Omega)\right)^n$ with $\nabla\cdot T\in L^n(\Omega),$ there exists a unique function $[T,\nu_\Omega]\in L^\infty(\partial\Omega)$ such that
\begin{equation}\label{wibp}
    \int_{\partial\Omega}[T,\nu_\Omega]u\,d\mathcal{H}^{n-1}=\int_\Omega u\nabla\cdot T\,dx+\int_\Omega T\cdot Du\,dx,\hspace{0.1in}u\in C^1(\bar\Omega).
\end{equation}
Moreover, for $u\in BV(\Omega)$ and $T\in\left(L^\infty(\Omega)\right)^n$ with $\nabla\cdot T\in L^n(\Omega),$ the linear functional $u\mapsto (T\cdot Du)$ gives rise to a Radon measure on $\Omega$ and \eqref{wibp} holds for all $u\in BV(\Omega)$ (see \cite{GA1,GA2}).

\begin{lemma}\label{lemma H2} Suppose that $0\leq\sigma(x)\leq\sigma_1$ for some constant $\sigma_1>0.$ Then
$$
    \int_\Omega |J||\tJ|-J\cdot\tJ\,dx\leq C\|H-\tH\|_{L^\infty(\Omega)},
$$
where $C=C(\Omega, m,k_1,\sigma_1)$ is a constant independent of $u$ and $\tu.$
\end{lemma}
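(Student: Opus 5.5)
Since $|J|=a$ on the support of $|Du+F|$ and $J=\sigma(Du+F)$, I will use the pointwise bound $|J||\tJ|\le a|\tJ|\le a\cdot a$, which is not directly useful. Instead, I aim to bound $\int_\Omega\big(|J||\tJ|-J\cdot\tJ\big)$ by $\sigma_1$ times $\int_\Omega \big(a|D\tu+F|-\tJ\cdot(D\tu+F)\big)$-type quantities, which vanish. The cleaner route is to write $J=\sigma(Du+F)$ and $|J|\,|\tJ|\le \sigma\,|Du+F|\,a$ (since $|\tJ|\le a$). Then
\[
\int_\Omega |J||\tJ|-J\cdot\tJ\,dx\le \int_\Omega \sigma\big(a|Du+F|-\tJ\cdot(Du+F)\big)\,dx\le \sigma_1\int_\Omega \big(a|Du+F|-\tJ\cdot(Du+F)\big)\,dx,
\]
where the last step uses that the integrand is nonnegative because $|\tJ|\le a$. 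So the key step is to show
\[
\int_\Omega a|Du+F|\,dx-\int_\Omega \tJ\cdot(Du+F)\,dx\le C\|H-\tH\|_{L^\infty(\Omega)}.
\]

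To estimate this I will use the generalized integration by parts \eqref{wibp} with $T=\tJ$. Since $u\in BV_0(\Omega)$ and $\nabla\cdot\tJ=\tH$, we get $\int_\Omega \tJ\cdot Du=-\int_\Omega \tH u$. Hence
\[
\int_\Omega \tJ\cdot(Du+F)=-\int_\Omega \tH u\,dx+\int_\Omega \tJ\cdot F\,dx .
\]
By the duality identity \eqref{dualityGap2} applied to the perturbed problem, $\langle F,\tJ\rangle$ equals the minimal value $\int_\Omega a|D\tu+F|+\tH\tu$. Therefore the quantity to bound becomes
\[
\Big(\int_\Omega a|Du+F|+Hu\Big)-\Big(\int_\Omega a|D\tu+F|+\tH\tu\Big)+\int_\Omega (\tH-H)u\,dx .
\]
Lemma \ref{lemma H1} bounds the difference of minimal energies by $C\|H-\tH\|_{L^\infty}$. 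The last term is at most $\|H-\tH\|_{L^\infty}\|u\|_{L^1}$, which Lemma \ref{rmk H2} controls by $C\|H-\tH\|_{L^\infty}$. This gives the claim with $C=C(\Omega,m,k_1,\sigma_1)$.

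The main obstacle is justifying the integration by parts and the duality identity in the present setting. Two points need care. First, \eqref{wibp} requires $\tJ\in L^\infty$ with divergence in $L^n$, which holds since $|\tJ|\le a\le M$ and $\tH\in L^\infty$; the boundary term vanishes because $u$ has zero trace. Second, Theorem \ref{Structure1} is stated for $H^1_0$ minimizers, while here $u,\tu$ are $BV_0$ minimizers, and I need the infima to agree. I would handle this by density of $H^1_0$ in $BV_0$ under strict convergence: the functional is continuous under strict convergence, since $a$ is bounded and $H\in L^\infty$ controls the linear term. I would also check that $|J|\le a$ holds a.e. everywhere, not only on the support of $|Du+F|$, so that the pointwise nonnegativity step is valid. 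Where $Du+F=0$ we have $J=\sigma(Du+F)=0$, so those points contribute nothing in any case.
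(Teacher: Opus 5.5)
Your proof is correct and follows essentially the same route as the paper: bound the integrand by $\sigma_1\big(a|Du+F|-\tJ\cdot(Du+F)\big)$, integrate by parts against $\tJ$ using $\nabla\cdot\tJ=\tH$ and the zero boundary values, and finish with Lemmas \ref{lemma H1} and \ref{rmk H2}. The only difference is cosmetic. You obtain $\int_\Omega \tJ\cdot F\,dx=\int_\Omega a|D\tu+F|+\tH\tu\,dx$ directly from the zero duality gap \eqref{dualityGap2}, whereas the paper derives it from a second integration by parts against $\tu$ combined with $\tJ\cdot(D\tu+F)=a|D\tu+F|$; your use of $|\tJ|\le a$ in place of the paper's equality $|\tJ||Du+F|=a|Du+F|$ is, if anything, slightly cleaner.
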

{\bf Proof.}
Note that by the weak integration by parts formula,
\begin{align*}
    \int_\Omega u\nabla\cdot\tJ+\tilde J\cdot Du\,dx
    =\int_{\partial\Omega}[\tJ,\nu_\Omega]u\,dx
    =0
    =\int_{\partial\Omega}[\tJ,\nu_\Omega]\tu\,dx
    =\int_\Omega \tu\nabla\cdot\tJ+\tJ\cdot D\tu\,dx
\end{align*}
and thus,
$$
    \int_\Omega \tH u+\tJ\cdot Du\,dx=\int_\Omega \tH\tu+\tJ\cdot D\tu\,dx.
$$
Then
\allowdisplaybreaks
    \begin{align*}
        \int_\Omega |J||\tJ|-J\cdot\tJ\,dx
        &=\int_\Omega \sigma|\tJ||Du+F|-\sigma\tJ\cdot(Du+F)\,dx\nl
        &\leq\sigma_1\int_\Omega |\tJ||Du+F|-\tJ\cdot F-\tJ\cdot Du\,dx\nl
        &=\sigma_1\int_\Omega |\tJ||Du+F|-\tJ\cdot F-(-\tH u+\tH\tu+\tJ\cdot D\tu)\,dx\nl
        &=\sigma_1\int_\Omega |\tJ||Du+F|-\tJ\cdot(D\tu+F)+\tH u-\tH\tu\,dx\nl
        &=\sigma_1\int_\Omega |\tJ||Du+F|-|\tJ||D\tu+F|+\tH u-\tH\tu\,dx\nl
        &=\sigma_1\int_\Omega a|Du+F|+Hu-a|D\tu+F|-\tH\tu+\left(\tH-H\right)u\,dx\nl
        &\leq \sigma_1(C+\|u\|_{L^1(\Omega)})\|H-\tH\|_{L^\infty(\Omega)}
    \end{align*}
    where $C$ is as in Lemma \ref{lemma H1}. By Lemma \ref{rmk H2}, the proof is complete.
\hfill $\Box$

Now we are ready to prove the main result of this section. 
\begin{theorem}\label{thm H1}
    Suppose that $0\leq\sigma(x)\leq\sigma_1$ for some constant $\sigma_1>0.$ Then
$$
    \|J-\tJ\|_{L^1(\Omega)}\leq C\|H-\tH\|_{L^\infty(\Omega)}^{\frac{1}{2}},
$$
where $C=C(\Omega, m,k_1,\sigma_1)$ is a constant independent of $u$ and $\tu.$
\end{theorem}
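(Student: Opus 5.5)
The plan is to turn the integral estimate of Lemma \ref{lemma H2} into an $L^1$ bound on $J-\tJ$ through a pointwise algebraic identity followed by Cauchy--Schwarz. For any two vectors $v,w\in\R^n$,
\[
|v-w|^2=(|v|-|w|)^2+2\big(|v||w|-v\cdot w\big).
\]
This splits the difference into a \emph{magnitude} part and an \emph{angular} part. The angular part is controlled in $L^1$ by Lemma \ref{lemma H2}. We expect the magnitude part to vanish (or be negligible), because $|J|=a=|\tJ|$ wherever the identities $|J|=a$, $|\tJ|=a$ from Theorem \ref{Structure1} hold.

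The steps, in order, are as follows.

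\begin{enumerate}[(i)]
\item Record that $|J|\le a\le M$ and $|\tJ|\le a\le M$ a.e. This gives the uniform bound $|J-\tJ|\le 2M$, and it shows that the integrand $|J||\tJ|-J\cdot\tJ$ is nonnegative a.e.
\item Apply the identity above pointwise. On the set where $|J|=|\tJ|=a$, it gives $|J-\tJ|^2=2(|J||\tJ|-J\cdot\tJ)$.
\item Integrate and invoke Lemma \ref{lemma H2}:
\[
\int_\Omega |J-\tJ|^2\,dx\le 2C\|H-\tH\|_{L^\infty(\Omega)} .
\]
\item Conclude with Cauchy--Schwarz on the bounded domain:
\[
\|J-\tJ\|_{L^1(\Omega)}\le |\Omega|^{1/2}\|J-\tJ\|_{L^2(\Omega)}\le C'\|H-\tH\|_{L^\infty(\Omega)}^{1/2}.
\]
The exponent $\tfrac12$ in the statement is exactly what this step produces, which supports the plan.
\end{enumerate}

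The main obstacle is step (ii): the equalities $|J|=a$ and $|\tJ|=a$ are only known $|Du+F|$-a.e. and $|D\tu+F|$-a.e. respectively. Off these sets we only have $|J|,|\tJ|\le a$, and the magnitude term $(|J|-|\tJ|)^2$ must be handled separately. The first approach I would try avoids the exact identity and uses a weaker pointwise inequality valid whenever $|v|,|w|\le a$ and at least one of them equals $a$. For instance, if $|v|=a\ge|w|$, then
\[
|v-w|^2=|v|^2+|w|^2-2v\cdot w\le 2|v|^2-2v\cdot w\le \tfrac{2a}{|w|}\big(|v||w|-v\cdot w\big),
\]
which degenerates only when $|w|$ is small.

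If that fails, I would use the paper's convention that $J=\sigma(Du+F)$ with $|J|=a$ on the relevant region; that is, I would work on the set where both minimizers are noncharacteristic, so that both identities hold. Outside this set, I would bound the contribution using $|J-\tJ|\le 2M$ together with the hypothesis $\sigma\le\sigma_1$, which forces $|Du+F|\ge a/\sigma_1\ge m/\sigma_1$ wherever $|J|=a$.

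Apart from this bookkeeping on the characteristic sets, the argument consists only of the algebraic identity, Lemma \ref{lemma H2}, and Cauchy--Schwarz.
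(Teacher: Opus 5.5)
Your steps (i)--(iv) are the paper's proof. The paper writes $|J-\tJ|^2=|J|^2+|\tJ|^2-2J\cdot\tJ=2a^2-2J\cdot\tJ=2(|J||\tJ|-J\cdot\tJ)$, applies Cauchy--Schwarz to $\int_\Omega\sqrt{\cdot}\,dx$, and invokes Lemma~\ref{lemma H2}. The paper resolves your ``main obstacle'' by taking $|J|=|\tJ|=a$ a.e.\ in $\Omega$ for granted, with no comment. This holds automatically when $u,\tu$ are noncharacteristic a.e. For example, for the admissible functions of Section~3, $|J|=|\sigma(\nabla u+F)|\ge m>0$ forces $\nabla u+F\neq0$, hence $|J|=a$. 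Under that convention your main line is complete.

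Your fallbacks for the case where this identity fails do not work. The inequality $|v-w|^2\le\frac{2a}{|w|}(|v||w|-v\cdot w)$ for $|v|=a\ge|w|$ is false. Take $w=\lambda v$ with $0<\lambda<1$: the left side is $(1-\lambda)^2a^2>0$, while the right side is $0$. Your intermediate step $2|v|^2-2v\cdot w\le\ldots$ fails for the same reason.

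This is not a matter of constants. The quantity $|J||\tJ|-J\cdot\tJ$ controlled by Lemma~\ref{lemma H2} vanishes whenever $J$ and $\tJ$ are parallel, so it carries no information about the magnitude term $(|J|-|\tJ|)^2$.

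Your second fallback only yields $\|J-\tJ\|_{L^1(\Omega)}\le C\|H-\tH\|_{L^\infty(\Omega)}^{1/2}+2M|E|$, where $E$ is the set on which $|J|=|\tJ|=a$ fails (essentially the union of the characteristic sets $\{Du+F=0\}$ and $\{D\tu+F=0\}$). The last term is not small in $\|H-\tH\|_{L^\infty(\Omega)}$. The bound $\sigma\le\sigma_1$ does not help there, since on the characteristic set $J=\sigma(Du+F)=0$ while $\tJ$ can have size up to $a$.

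So the argument closes exactly when $|J|=|\tJ|=a$ a.e.\ in $\Omega$. State this explicitly as a hypothesis, as the paper implicitly does, rather than trying to circumvent it.
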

{\bf Proof.} We have 
    \begin{align*}
        \Int_\Omega\sqrt{|J-\tJ|^2}\,dx
        &=\Int_\Omega\sqrt{|J|^2+|\tJ|^2-2J\cdot\tJ}\,dx\nl
        &=\Int_\Omega\sqrt{2a^2-2J\cdot\tJ}\,dx\nl
        &=\sqrt{2}\Int_\Omega\sqrt{|J||\tJ|-J\cdot\tJ}\,dx\nl
        &\leq\sqrt{2|\Omega|}\left(\Int_{\Omega}|J||\tJ|-J\cdot\tJ\,dx\right)^{1/2}.
    \end{align*}
Hence the result follows from Lemma \ref{lemma H2}.
\hfill $\Box$

\begin{remark} In view of Theorem \ref{Structure1}, $\frac{D u+F}{|Du+F|}$ and $\frac{D \tilde{u}+F}{|D\tilde{u}+F|}$ are parallel to $J$ and $\tilde{J}$, respectively.  So Theorem \eqref{thm H1} implies that  if $\tilde{H}$ is close to $H$, then the direction of $Du+F$ is close to the direction of $D \tu+F$ in $\Omega$. This result will play a crucial role in the proof of our results in the following sections.
\end{remark}

\section{$L^1$ stability of the minimizers}

In this section, we establish the stability of minimizers of the least gradient problem \eqref{mainmin} in $L^1(\Omega).$ In general, \eqref{mainmin} does not have unique minimizers so in order to prove any stability results, further assumptions on $H,\tH$ and on the corresponding minimizers are necessary.

\begin{definition}
    Fix positive constants $\sigma_0,\sigma_1.$ We say that $u\in C^1\left(\bar\Omega\right)$ is \textit{admissible} if it solves \eqref{mainmin} for some $\sigma\in C(\Omega)$ with
    $$
        0<\sigma_0<\sigma<\sigma_1,
    $$
    and $m\leq |J|=|\sigma(\nabla u+F)|\leq M,$ where $m$ and $M$ are positive constants as in \eqref{mbounds}. We similarly define admissibility for $\tu.$
\end{definition}

For the remainder of the paper, we will assume that $F$ is a conservative vector field $\big($i.e. $F=\nabla f$ for some function $f\in C^1(\Omega)\big).$ Furthermore, assume $f\in C(\bar\Omega)$ and on $\bar\Omega,$ define $v:=u+f$ and $\tv:=\tu+f.$ Then $v-\tv=u-\tu.$

We will prove our results in dimension $n=2$ and then extend them to dimension $n=3.$ Let $v\in C^1(\bar\Omega)$ with $|\nabla v|>0$ almost everywhere in $\Omega.$ By the regularity result of De Giorgi, (Theorem 4.11 in \cite{EG}) it follows that almost all level sets of $v$ are $C^1$ hypersurfaces. For $n=2,$ we will furthermore assume that the length of level sets of $v$ in $\Omega$ is uniformly bounded, i.e.
  \begin{equation}\label{crvbnd}
      \sup\limits_{t\inR}\int_{\{v=t\}\cap\Omega}\,dl=K<\infty.
  \end{equation}

  \begin{lemma}
      Almost every level set of $v$ reaches $\partial\Omega.$
  \end{lemma}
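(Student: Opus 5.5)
The plan is to argue by contradiction, combining the divergence structure $\nabla\cdot J=H$ with the length bound \eqref{crvbnd} and the sign information carried by $J$. Since $v\in C^1(\bar\Omega)$ and $|\nabla v|>0$ a.e., the implicit function theorem together with De Giorgi's regularity (Theorem 4.11 in \cite{EG}) shows that for a.e.\ $t$ the level set $\{v=t\}\cap\Omega$ is a $C^1$ curve (union of curves) on which $\nabla v\neq 0$. By \eqref{crvbnd} each such curve has finite length, at most $K$. A connected component of such a level set that does not reach $\partial\Omega$ must therefore be a closed $C^1$ Jordan curve $\gamma\subset\Omega$ bounding a region $U\Subset\Omega$. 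This happens because a finite-length $C^1$ embedded curve with no endpoints in $\Omega$ and not accumulating at $\partial\Omega$ is compact without boundary. The goal is to show that the set of $t$ for which such a closed component exists has measure zero.

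First I would set up a flux identity on $U$. Recall $\nabla v=\nabla u+F$ and $J=\sigma\nabla v$ with $\sigma\ge\sigma_0>0$ (admissibility), so $J$ is parallel to $\nabla v$ and hence normal to $\gamma=\partial U$. Along $\gamma$ we have $v\equiv t$, and by continuity $v-t$ has a fixed sign just inside $U$ near $\gamma$ on each arc where $\nabla v\ne0$. Consequently $J\cdot\nu_U=\pm|J|=\pm a$ with sign determined by whether $v$ increases outward. Applying \eqref{wibp} with test function $1$ gives
\[
\int_U H\,dx=\int_{\partial U} J\cdot\nu_U\,dl .
\]
If $\nu_U$ is consistently outward along $\nabla v$ (for instance when $t$ is a local extremum level for $v$ on $U$), the right side has modulus at least $m\,\mathcal{H}^1(\gamma)$. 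The isoperimetric inequality then gives $|U|\le \mathcal{H}^1(\gamma)^2/(4\pi)$, so
\[
m\,\mathcal{H}^1(\gamma)\le \|H\|_{L^\infty}|U|\le \|H\|_{L^\infty}\frac{\mathcal{H}^1(\gamma)^2}{4\pi}.
\]
This forces $\mathcal{H}^1(\gamma)\ge 4\pi m/\|H\|_{L^\infty}$, a lower bound on the size of any closed level curve, and such a curve must enclose an interior extremum of $v$.

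Second, to exclude a positive-measure family of closed levels, I would use the structure of nested closed level curves. If closed components exist for $t$ in a set of positive measure, then by the coarea formula they sweep out an open annular region $A\subset\Omega$ foliated by closed level curves, with $v$ monotone across the leaves. This region is bounded either by a critical point/extremum of $v$ or by a set where $\nabla v=0$. Integrating $\nabla\cdot J=H$ over the sublevel region $U_t$ for each $t$ gives
\[
\frac{d}{dt}\int_{U_t}H\,dx=\frac{d}{dt}\bigl(\pm a\text{-weighted length of }\partial U_t\bigr),
\]
and I expect to combine this with the smallness \eqref{Hbound}, $\|H\|_\infty<m/C_\Omega$, to produce a contradiction. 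The idea is that a minimizer cannot have an interior "cap," because replacing $u$ by its truncation at level $t$ on $U_t$ lowers $\int a|Dw+F|$ by at least $m\,|D v|(U_t)$ while changing $\int Hw$ by at most $\|H\|_\infty\|v-t\|_{L^1(U_t)}\le \|H\|_\infty C_\Omega |Dv|(U_t)$, using \eqref{C_Omega} on $U_t$ with $v-t\in BV_0(U_t)$. Here \eqref{C_Omega} is applied with $\Omega$'s constant, since $BV_0(U_t)\subset BV_0(\Omega)$. By \eqref{Hbound} the energy strictly decreases, contradicting minimality unless $|Dv|(U_t)=0$. This in turn contradicts $|\nabla v|>0$ a.e.

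The main obstacle is this truncation step: checking that the competitor $w=u$ outside $U_t$ and $w=t-f$ inside lies in $BV_0(\Omega)$ and has no jump across $\partial U_t$ (it does not, since $v=t$ there), and handling the fact that the truncation replaces $|Dv|$ by $0$ only on $U_t$. I would try this truncation argument first, since it is cleaner than the flux/isoperimetric route and uses only \eqref{C_Omega}, \eqref{Hbound} and \eqref{mbounds}.
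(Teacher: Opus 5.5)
Your truncation argument is essentially the paper's proof. The competitor that makes $v\equiv t$ on the region enclosed by the level curve, estimated with \eqref{mbounds}, the Poincar\'e inequality \eqref{C_Omega} applied to the zero extension of $v-t$, and \eqref{Hbound}, is exactly how the paper shows $v\equiv t$ on $\Omega_t$. Your finish via $|\nabla v|>0$ a.e.\ is slightly cleaner than the paper's countability step, though both arguments tacitly need the enclosed region to lie inside $\Omega$ (e.g.\ $\Omega$ simply connected). The flux/isoperimetric computation is not needed, and on its own it only yields the lower bound $\mathcal{H}^1(\gamma)\ge 4\pi m/\|H\|_{L^\infty}$, not a contradiction.
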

{\bf Proof.}
      Almost every level set is a $C^1$ hypersurface. Suppose that there exists $t\inR$ such that $\Gamma_0:=\{x\in\bar\Omega:v(x)=t\}$ satisfies $\Gamma_0\cap\partial\Omega=\varnothing.$ By the Alexander duality theorem (see, e.g., Theorem 27.10 in
\cite{GH}), $\R^n$ is partitioned into a bounded open connected region, $\Omega_t,$ and $\R^n\backslash\bar\Omega_t,$ which share the boundary $\Gamma_0.$ In $\bar\Omega_t,$ define $w(x)=v(x)-t.$ Then
      \begin{align*}
          \Int_{\Omega_t}a|Dw|+Hw
          \,dx
          &\geq m\|Dw\|_{L^1(\Omega_t)}-\|H\|_{L^\infty(\Omega)}\|w\|_{L^1(\Omega_t)}\nl
          &\geq \left(m-\|H\|_{L^\infty(\Omega)}C_{\Omega_t}\right)\|Dw\|_{L^1(\Omega_t)}
      \end{align*}
      where $C_{\Omega_t}$ is the constant in \eqref{C_Omega}. Extend the definition of $w$ to be identically 0 on $\bar\Omega\backslash\bar\Omega_t.$ Since $\|w\|_{L^1(\Omega_t)}=\|w\|_{L^1(\Omega)}\leq C_\Omega\|Dw\|_{L^1(\Omega)},$ we may assume that $C_{\Omega_t}\leq C_\Omega.$ Therefore, by \eqref{Hbound},
      $$
          \Int_{\Omega_t}a|Dw|+Hw
          \,dx
          \geq \left(m-\|H\|_{L^\infty(\Omega)}C_{\Omega}\right)\|Dw\|_{L^1(\Omega_t)}\geq0.
      $$
      Equivalently, $$
        \Int_{\Omega_t}a|Dv|+Hv\,dx
        \geq t\Int_{\Omega_t}H\,dx,
      $$
      and the inequality is strict if $v \not \equiv t$. Hence, $v\equiv t$ is the only minimizer of $\Int_{\Omega_t}a|Dv|+Hv\,dx$ in $\Omega_t.$ 

      Now consider the disjoint collection of all such $\Omega_t$ and, for each $t,$ choose $x_t\in\Omega_t\cap\mathbb{Q}^n.$ Then each $x_t$ is distinct. Thus, the number of level sets that do not reach the boundary is countable.
      \hfill$\Box$

  \begin{theorem}\label{thm H2}
      Let $n=2,$ and suppose $u$ and $\tu$ are admissible with $u|_{\partial\Omega}=0=\tu|_{\partial\Omega}.$ If $u$ satisfies \eqref{crvbnd} then
    $$
        \|u-\tu\|_{L^1(\Omega)}\leq C\|H-\tH\|_{L^\infty(\Omega)}^{\frac{1}{2}},
    $$
    for some constant $C(\Omega, m,M,k_1,K,\sigma_0,\sigma_1)$ independent of $\tu$ and $\tsigma.$
  \end{theorem}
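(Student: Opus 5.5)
We reduce to the functions $v=u+f$ and $\tv=\tu+f$, for which $v-\tv=u-\tu$, $\nabla v=\nabla u+F$ and $\nabla\tv=\nabla\tu+F$. By admissibility, $J=\sigma\nabla v$ and $\tJ=\tsigma\nabla\tv$ with $\sigma_0<\sigma,\tsigma<\sigma_1$. Hence $|\nabla v|,|\nabla \tv|\ge m/\sigma_1>0$, and both are bounded above by $M/\sigma_0$. Theorem \ref{thm H1} then gives that the unit directions $\nabla v/|\nabla v|$ and $\nabla\tv/|\nabla\tv|$ are $L^1$-close, at rate $\|H-\tH\|_{L^\infty}^{1/2}$. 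Indeed,
\[
\left|\frac{\nabla v}{|\nabla v|}-\frac{\nabla\tv}{|\nabla\tv|}\right|=\left|\frac{J}{|J|}-\frac{\tJ}{|\tJ|}\right|\le \frac{2}{m}|J-\tJ|.
\]
The task is to convert this directional closeness into closeness of the functions themselves, using that the two functions share boundary values on $\partial\Omega$.

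The plan is to use the level-set structure of $v$ in dimension two. Since $|\nabla v|>0$ and $v\in C^1(\bar\Omega)$, the level sets $\{v=t\}$ are $C^1$ curves. By the preceding lemma, almost every one reaches $\partial\Omega$, where $v=\tv$ (both equal $f$ there). Along a level curve $\gamma_t$ of $v$, parametrized by arclength from a boundary point $x_0$, we have
\[
\frac{d}{ds}(\tv-v)(\gamma_t(s))=\nabla\tv\cdot\gamma_t'=|\nabla\tv|\left(\frac{\nabla\tv}{|\nabla\tv|}-\frac{\nabla v}{|\nabla v|}\right)\cdot\gamma_t',
\]
because $\nabla v\perp\gamma_t'$. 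This gives
\[
|v-\tv|(\gamma_t(s))\le \frac{M}{\sigma_0}\cdot\frac{2}{m}\int_{\gamma_t}|J-\tJ|\,dl .
\]
Integrating over the curve, whose length is at most $K$ by \eqref{crvbnd}, and then over $t$ via the coarea formula, $dx=dl\,dt/|\nabla v|$, yields
\[
\int_\Omega|v-\tv|\,dx=\int_{\R}\int_{\{v=t\}}\frac{|v-\tv|}{|\nabla v|}\,dl\,dt\le \frac{\sigma_1}{m}\,K\,\frac{2M}{m\sigma_0}\int_{\R}\int_{\{v=t\}}|J-\tJ|\,dl\,dt .
\]
Applying coarea once more, $\int_\R\int_{\{v=t\}}|J-\tJ|\,dl\,dt=\int_\Omega|J-\tJ|\,|\nabla v|\,dx\le \frac{M}{\sigma_0}\|J-\tJ\|_{L^1}$. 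Theorem \ref{thm H1} then closes the estimate with the exponent $1/2$ and a constant depending on $\Omega,m,M,k_1,K,\sigma_0,\sigma_1$. Lemmas \ref{rmk H2}--\ref{lemma H2} apply since $\sigma\le\sigma_1$.

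The main obstacle is making the curve integration rigorous. We must ensure that almost every level set is a single connected arc, or a finite union of arcs, each of which hits $\partial\Omega$ at a point where $v=\tv$. Level sets could have several components, some closed or not reaching the boundary. The preceding lemma handles the nonreaching components only up to a countable set of $t$, which is null in the $dt$-integral, so it should suffice. For each component reaching $\partial\Omega$, we run the estimate from its own boundary endpoint, bounding the length of each component by $K$. A second technical point is that the level curves of $v$ lie in $\Omega$ and are $C^1$, so the fundamental theorem of calculus applies to $\tv-v$ along them up to the boundary. This uses $u,\tu\in C^1(\bar\Omega)$ and continuity of $f$ on $\bar\Omega$.
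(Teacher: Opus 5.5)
Your proposal is correct and follows essentially the same route as the paper. Both arguments apply the coarea formula with $|\nabla v|\ge m/\sigma_1$, integrate $v-\tv$ along each level component of $v$ starting from its boundary point, use the length bound $K$, apply coarea again with $|\nabla v|\le M/\sigma_0$, and finish with Theorem~\ref{thm H1}. The only difference is that the paper writes the derivative along the curve directly as $\frac{J-\tJ}{\tsigma}\cdot\gamma'$, obtained by multiplying $\nabla v\cdot\gamma'=0$ by $\sigma/\tsigma$. This gives the factor $1/\sigma_0$ without your unit-vector estimate and its extra factor $2M/(m\sigma_0)$.
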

{\bf Proof.}
    Since $u$ is admissible,
    $$
        0<\frac{m}{\sigma_1}\leq|\nabla u+F|=|\nabla v|
    $$
    for all $x\in\Omega.$ From the coarea formula, it follows that
    \begin{equation}\label{i1}
        \frac{m}{\sigma_1}\int_\Omega|u-\tu|\,dx
        =\frac{m}{\sigma_1}\int_\Omega|v-\tv|\,dx\leq\int_\Omega|\nabla v||v-\tv|\,dx=\int_\R\int_{\{v=t\}\cap\Omega}|v-\tv|\,dl\,dt.
    \end{equation}
    Let $\Gamma_t$ be a connected component of $\{x\in\Omega:v(x)=t\}$ and let $\gamma:[0,L]\to\Gamma_t$ be a path parameterized by the arc length of $\Gamma_t$ with $\gamma(0)\in\partial\Omega.$ Now define $h:[0,L]\to\R$ by
    $$
        h(s)=v(\gamma(s))-\tv(\gamma(s)).
    $$
    Then $h(0)=0.$ Since $v$ is constant on $\Gamma_t,$ then $0=\frac{d}{ds}v(\gamma(s))=\nabla v(\gamma(s))\cdot\gamma'(s)$ on $\Gamma_t.$ Therefore,
    \begin{align*}
        h'(s)
        &=\nabla v(\gamma(s))\cdot\gamma'(s)-\nabla \tv(\gamma(s))\cdot\gamma'(s)\nl
        &=\frac{\sigma(\gamma(s))}{\tsigma(\gamma(s))}\nabla v(\gamma(s))\cdot\gamma'(s)-\nabla \tv(\gamma(s))\cdot\gamma'(s)\nl
        &=\frac{J(\gamma(s))-\tJ(\gamma(s))}{\tsigma(\gamma(s))}\cdot\gamma'(s).
    \end{align*}
    Note that there exists $x_t^*$ on $\Gamma_t$ such that
    $$
        |v(x_t^*)-\tv(x_t^*)|=\max\limits_{x\in\Gamma_t}|v(x)-\tv(x)|.
    $$
    Furthermore, there exists $s_0\in[0,L]$ such that $x_t^*=\gamma(s_0).$ Then
    \begin{align*}
        |v(x_t^*)-\tv(x_t^*)|
        &=|h(s_0)|\nl
        &=\left|\int_0^{s_0}h'(r)\,dr\right|\nl
        &\leq\recip{\sigma_0}\int_0^L|J(\gamma(r))-\tJ(\gamma(r))|\,dr\nl
        &=\recip{\sigma_0}\int_{\Gamma_t}|J-\tJ|\,dl.
    \end{align*}
    Thus,
    $$
        \int_{\Gamma_t}|v-\tv|\,dl
        \leq |v(x_t^*)-\tv(x_t^*)|\int_{\Gamma_t}\,dl
        \leq\frac{K}{\sigma_0}\int_{\Gamma_t}|J-\tJ|\,dl
    $$
    and therefore,
    $$
        \int_{\{v=t\}\cap\Omega}|v-\tv|\,dl
        \leq\frac{K}{\sigma_0}\int_{\{v=t\}\cap\Omega}|J-\tJ|\,dl.
    $$
    By the coarea formula and Theorem \ref{thm H1},
    \begin{align*}
        \int_\R\int_{\{v=t\}\cap\Omega}|v-\tv|\,dl\,dt
        &\leq\frac{K}{\sigma_0}\int_\R\int_{\{v=t\}\cap\Omega}|J-\tJ|\,dl\,dt\nl
        &=\frac{K}{\sigma_0}\int_\Omega|\nabla v||J-\tJ|\,dx\nl
        &\leq\frac{KM}{\sigma_0^2}\int_\Omega|J-\tJ|\,dx\nl
        &\leq\frac{KMC}{\sigma_0^2}\|H-\tH\|_{L^\infty(\Omega)}^{1/2}
    \end{align*}
    for some constant $C=C(\Omega,m,k_1,\sigma_1).$ By \eqref{i1}, the proof is complete. \hfill $\Box$

\bigskip

In order to generalize Theorem \ref{thm H2} to dimension $n=3,$ we
need the following additional assumption on level sets of $v.$

\begin{definition}\label{def H2}
    Let $n=3.$ Let $u\in C^1(\bar\Omega)$ be admissible and $v$ be defined as in Remark \ref{rmk H2}. We say that the level sets of $v$ can be foliated to one-dimensional curves if for almost every $t\in \text{range}(v)$, every connected component $\Gamma_t$ of $\{v=t\},$ there exists a function $g_t(x)\in C^1(\Gamma_t)$ such that $0<c_g\leq |\nabla g_t|\leq C_g,$ for some constants $c_g, C_g$ independent of $t.$ Moreover, every component of $\{v=t\}\cap\{g_t=r\}\cap\Omega$ is a $C^1$ curve reaching $\partial\Omega$ for almost every $t\in\text{range}(v)$ and for all $r\inR.$ Assume that the lengths of the connected components of $\{v=t\}\cap\{g_t=r\}\cap\Omega$ are uniformly bounded by some constant $K.$
\end{definition}

\begin{theorem}\label{thm H3}
    Let $n=3$ and suppose that $u$ and $\tu$ are admissible with $u|_{\partial\Omega}=\tu|_{\partial\Omega}=0.$ Suppose that the level sets of $v$ can be foliated to one-dimensional curves as in Definition \ref{def H2}. Then
$$
    \|u-\tu\|_{L^1(\Omega)}\leq C\|H-\tH\|_{L^\infty(\Omega)}^{1/2}
$$
for some constant $C(\Omega, m,M,k_1,K,\sigma_0,\sigma_1,c_g,C_g)$ independent of $\tu$ and $\tsigma.$
\end{theorem}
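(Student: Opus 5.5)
We follow the proof of Theorem \ref{thm H2}, replacing the one-dimensional level sets by the curves of the foliation in Definition \ref{def H2}. The coarea formula is applied twice: first to $v$ on $\Omega$, then to $g_t$ on each level surface $\Gamma_t$. As before, admissibility gives $|\nabla v|=|\nabla u+F|\ge m/\sigma_1$. Hence
\[
\frac{m}{\sigma_1}\int_\Omega|u-\tu|\,dx\le\int_\Omega|\nabla v||v-\tv|\,dx=\int_\R\int_{\{v=t\}\cap\Omega}|v-\tv|\,d\mathcal{H}^2\,dt .
\]
For a.e. $t$ and each connected component $\Gamma_t$, we apply the coarea formula on the $C^1$ surface $\Gamma_t$ with the function $g_t$. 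Its tangential gradient satisfies $c_g\le|\nabla g_t|\le C_g$, so
\[
\int_{\Gamma_t}|v-\tv|\,d\mathcal{H}^2\le\frac{1}{c_g}\int_\R\int_{\Gamma_t\cap\{g_t=r\}}|v-\tv|\,dl\,dr .
\]

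On each component $\gamma$ of $\Gamma_t\cap\{g_t=r\}\cap\Omega$ we repeat the one-dimensional argument verbatim. The curve is $C^1$, reaches $\partial\Omega$, and has length at most $K$. Parametrize it by arc length starting at a boundary point, where $v-\tv=u-\tu=0$. Since $\gamma\subset\{v=t\}$, we have $\nabla v\cdot\gamma'=0$. Set $h(s)=v(\gamma(s))-\tv(\gamma(s))$. Exactly as in Theorem \ref{thm H2},
\[
h'(s)=\frac{(J-\tJ)(\gamma(s))}{\tsigma(\gamma(s))}\cdot\gamma'(s).
\]
This gives $\max_\gamma|v-\tv|\le\sigma_0^{-1}\int_\gamma|J-\tJ|\,dl$, and therefore
\[
\int_\gamma|v-\tv|\,dl\le\frac{K}{\sigma_0}\int_\gamma|J-\tJ|\,dl.
\]
We sum over components and reverse both coareas. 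The inner one uses $|\nabla g_t|\le C_g$, and the outer one uses $|\nabla v|\le M/\sigma_0$. This yields
\[
\int_\R\int_{\{v=t\}\cap\Omega}|v-\tv|\,d\mathcal{H}^2\,dt\le\frac{KC_g}{c_g\sigma_0}\int_\Omega|\nabla v||J-\tJ|\,dx\le\frac{KC_gM}{c_g\sigma_0^2}\|J-\tJ\|_{L^1(\Omega)}.
\]
Theorem \ref{thm H1} then bounds $\|J-\tJ\|_{L^1(\Omega)}$ by $C\|H-\tH\|_{L^\infty(\Omega)}^{1/2}$, with $C=C(\Omega,m,k_1,\sigma_1)$. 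That theorem applies because $\sigma\le\sigma_1$ by admissibility. Together with the first display, this proves the claim with the stated dependence of the constant.

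The main obstacle is the measure-theoretic bookkeeping in the double coarea. The inner coarea on $\Gamma_t$ must use the tangential gradient of $g_t$, and Definition \ref{def H2} only gives lower and upper bounds on $|\nabla g_t|$. Measurability of $(t,r)\mapsto\int_{\Gamma_t\cap\{g_t=r\}}|J-\tJ|\,dl$ also needs care. I would handle this by treating $\{v=t\}\cap\Omega$ as a countable union of components, applying the coarea formula on each rectifiable $C^1$ surface, and then using Fubini. Null sets of $t$ (and of $r$) where the foliation hypothesis fails contribute nothing, since they have measure zero in the outer integral.
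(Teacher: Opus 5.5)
Your proposal is correct and follows the paper's proof essentially step by step. Both arguments apply the coarea formula for $v$, then a second coarea with $g_t$ on each level surface (dividing by $c_g$ going down and multiplying by $C_g$ coming back up), then the one-dimensional boundary-to-maximum estimate along each foliating curve with constant $K/\sigma_0$, and finally $|\nabla v|\le M/\sigma_0$ together with Theorem~\ref{thm H1}, giving the same constant $KC_gM/(c_g\sigma_0^2)$; your remarks on measurability and null sets in $(t,r)$ are extra care that the paper omits, but they do not change the argument.
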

{\bf Proof.}
    The proof is similar to the proof of Theorem \ref{thm H2}. Since $u$ is admissible, then
    \begin{equation}\label{I1}
        \frac{m}{\sigma_1}\int_\Omega|u-\tu|\,dx
        =\frac{m}{\sigma_1}\int_\Omega|v-\tv|\,dx
        \leq\int_\Omega|\nabla v||v-\tv|\,dx
        =\int_\R\int_{\{v=t\}\cap\Omega}|v-\tv|\,dS\,dt
    \end{equation}
    by the coarea formula. Consider $g_t$ from Definition \ref{def H2}. By the coarea formula, 
    \begin{align}\label{I2}
    \begin{split}
        \int_\R\int_{\{v=t\}\cap\Omega}|v-\tv|\,dS\,dt
        &=\int_\R\int_\R\int_{\{v=t\}\cap\{g_t=r\}\cap\Omega}\recip{|\nabla g_t|}|v-\tv|\,dl\,dr\,dt\nl
        &\leq\recip{c_g}\int_\R\int_\R\int_{\{v=t\}\cap\{g_t=r\}\cap\Omega}|v-\tv|\,dl\,dr\,dt.
    \end{split}
    \end{align}
    Parameterize the connected components $\Gamma_t$ of $\{v=t\}\cap\{g_t=r\}\cap\Omega$ by the arc length, $\gamma:[0,L]\to\Gamma_t$ where $L$ is the arc length of $\Gamma_t$ and $\gamma(0)\in\partial\Omega.$ Let $h(s)=v(\gamma(s))-\tv(\gamma(s)).$ Let $x_t^*$ be a point which maximizes $|v-\tv|$ on $\Gamma_t$ and let $s_0\in[0,L]$ be a point such that $\gamma(s_0)=x_t^*.$ Similar to the proof of Theorem \ref{thm H2},

    $$
        |v(x_t^*)-\tv(x_t^*)|\leq\recip{\sigma_0}\int_0^L|J(\gamma(\tau))-\tJ(\gamma(\tau))|\,d\tau
    $$
    and therefore,
    $$
        \int_{\Gamma_t}|v-\tv|\,dl\leq\frac{K}{\sigma_0}\int_{\Gamma_t}|J-\tJ|\,dl.
    $$
    Hence,
    \begin{equation}\label{I3}
        \int_{\{v=t\}\cap\{g_t=r\}\cap\Omega}|v-\tv|\,dl
        \leq\frac{K}{\sigma_0}\int_{\{v=t\}\cap\{g_t=r\}\cap\Omega}|J-\tJ|\,dl.
    \end{equation}
    By the coarea formula,
    \begin{align}\label{I4}
    \begin{split}
        \int_\R\int_\R\int_{\{v=t\}\cap\{g_t=r\}\cap\Omega}|J-\tJ|\,dl\,dr\,dt
        &=\int_\R\int_{\{v=t\}\cap\Omega}|\nabla g_t||J-\tJ|\,dS\,dt\\
        &\leq C_g\int_\R\int_{\{v=t\}\cap\Omega}|J-\tJ|\,dS\,dt\\
        &=C_g\int_\Omega|\nabla v||J-\tJ|\,dx\\
        &\leq\frac{C_g M}{\sigma_0}\int_\Omega|J-\tJ|\,dx\\
        &\leq\frac{C_g M}{\sigma_0}\cdot C\|H-\tH\|_{L^\infty(\Omega)}^{1/2}
    \end{split}
    \end{align}
    where we have used Theorem \ref{thm H1} and $C=C(\Omega, m,M,k_1,\sigma_1).$ By \eqref{I1},\eqref{I2},\eqref{I3}, and \eqref{I4}, the proof is complete.
    \hfill$\Box$

\section{$W^{1,1}$ stability of the minimizers}

In this section, we prove the stability of the minimizers of \eqref{mainmin} in $W^{1,1}.$ As mentioned in Section 2, in general, \eqref{mainmin} does not have unique minimizers, so to prove stability results, it is natural to expect stronger assumptions on the minimizers.

\begin{lemma}\label{lemma H3}
    Let $n=2,3,$ and suppose that $u,\tu\in H_0^1(\Omega)$ are admissible. Suppose $\partial\Omega$ is of class $C^3,$ $F\in C^2(\bar\Omega),$ $H,\tH\in C^1(\bar\Omega),$ and $\sigma,\tsigma\in C^{2,1}(\bar\Omega)$ with
    \begin{equation}\label{FHineq}
    \|F\|_{C^2(\Omega)},\|H\|_{C^1(\Omega)},\|\tH\|_{C^1(\Omega)}\leq k_2
\end{equation}
and
\begin{equation}\label{sigmaineq}
    \|\sigma\|_{C^2(\Omega)},\|\tsigma\|_{C^2(\Omega)}\leq\sigma_2
\end{equation}
for some $k_2,\,\sigma_2>0.$ Let
\begin{equation}\label{G}
    G(x)=\frac{\tJ(x)-J(x)}{\tsigma(x)},\,x\in\Omega,
\end{equation}
with $G=(G_1,G_2)$ for $n=2$ and $G=(G_1,G_2,G_3)$ for $n=3.$ Then
$$
    \|\nabla G_i\|_{L^1(\Omega)}\leq C\|J-\tJ||_{L^1(\Omega)}^{1/2},\hspace{0.1in}1\leq i\leq n
$$
for some constant $C\left(\Omega,k_2,\sigma_0,\sigma_2\right).$
\end{lemma}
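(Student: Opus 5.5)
The plan is to combine a uniform higher-order bound on $G$ with an interpolation inequality of Gagliardo--Nirenberg type, which converts the $L^1$ smallness of $G$ into $L^1$ smallness of $\nabla G$ at the cost of a square root. First, $G$ is small in $L^1$: since $\tsigma>\sigma_0$ by admissibility,
\[
\|G\|_{L^1(\Omega)}\leq \recip{\sigma_0}\|J-\tJ\|_{L^1(\Omega)}.
\]
The remaining work is to show that $\|G_i\|_{W^{2,1}(\Omega)}$ is bounded by a constant depending only on $\Omega,k_2,\sigma_0,\sigma_2$, and then to interpolate.

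\textbf{Uniform regularity.} Since $J=\sigma(\nabla u+F)$ and $\nabla\cdot J=H$, the admissible minimizer $u\in H^1_0(\Omega)$ is a weak solution of the linear, uniformly elliptic Dirichlet problem
\[
\nabla\cdot(\sigma\nabla u)=H-\nabla\cdot(\sigma F)\ \ \hbox{in}\ \ \Omega,\qquad u=0\ \ \hbox{on}\ \ \partial\Omega,
\]
with ellipticity constant $\sigma_0$. The same holds for $\tu$ with $\tsigma,\tH$.

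\begin{enumerate}[(i)]
\item By Lax--Milgram (test with $u$ and use Poincar\'e), $\|u\|_{H^1(\Omega)}\leq C(\|H\|_{L^2}+\|\sigma F\|_{L^2})$, which is bounded by \eqref{FHineq} and \eqref{sigmaineq}.
\item The right-hand side $H-\nabla\cdot(\sigma F)$ lies in $H^1(\Omega)$, with norm controlled by $k_2$ and $\sigma_2$.
\item The coefficient $\sigma$ lies in $C^{2,1}(\bar\Omega)\subset C^2$, and $\partial\Omega$ is $C^3$.
\end{enumerate}
Given these, global $H^{k+2}$ regularity for divergence-form operators (Gilbarg--Trudinger, or Evans Thm.~6.3.5 with $k=1$) yields
\[
\|u\|_{H^3(\Omega)}\leq C(\Omega,k_2,\sigma_0,\sigma_2),
\]
and likewise for $\tu$. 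Hence $J,\tJ\in H^2(\Omega)$ with bounded norms, using $F\in C^2$ and $\sigma\in C^2$. Since $1/\tsigma\in C^2(\bar\Omega)$ with norm controlled by $\sigma_0,\sigma_2$, the product rule gives $\|G_i\|_{H^2(\Omega)}\leq C$. On a bounded domain this implies $\|D^2G_i\|_{L^1(\Omega)}\leq C$.

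\textbf{Interpolation.} For $w\in W^{2,1}(\Omega)$ on a bounded domain with $C^3$ (in particular Lipschitz) boundary, the Nirenberg interpolation inequality gives
\[
\|\nabla w\|_{L^1(\Omega)}\leq C_\Omega\Big(\|D^2w\|_{L^1(\Omega)}^{1/2}\|w\|_{L^1(\Omega)}^{1/2}+\|w\|_{L^1(\Omega)}\Big).
\]
In the restricted dimensions $n=2,3$ one can alternatively go through $H^2\hookrightarrow C^{0,\alpha}$ and an $L^2$ interpolation $\|\nabla w\|_{L^2}\leq C\|w\|_{L^2}^{1/2}\|w\|_{H^2}^{1/2}$, bounding $\|w\|_{L^2}^2\leq\|w\|_{L^\infty}\|w\|_{L^1}$. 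Apply the inequality to $w=G_i$ and use the bound $\|G\|_{L^1}\leq\sigma_0^{-1}\|J-\tJ\|_{L^1}$ from the first paragraph. The linear term is absorbed into the square-root term because $\|J-\tJ\|_{L^1}\leq 2M|\Omega|$ is bounded, so $\|J-\tJ\|_{L^1}\leq (2M|\Omega|)^{1/2}\|J-\tJ\|_{L^1}^{1/2}$. This gives the claimed estimate. If one wants the constant independent of $M$, one can instead bound $\|J\|_{L^1}$ through the $H^3$ estimate.

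\textbf{Main obstacle.} The delicate step is the uniform $H^3$ estimate. I would check carefully that $\sigma\in C^{2,1}$, $F\in C^2$, $H\in C^1$ and a $C^3$ boundary are exactly what the $k=1$ higher regularity theorem requires. I would also need the a priori $L^2$ (or $H^1$) bound on $u$ entering that estimate to be uniform, which follows from $u\in H^1_0$, Lax--Milgram and $\sigma\geq\sigma_0$, with no appeal to the minimization structure. Everything after that is routine interpolation.
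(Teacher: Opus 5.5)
Your proposal is correct and follows the paper's proof. Both arguments get uniform $H^3$ bounds on $u,\tu$ by applying elliptic regularity to $\nabla\cdot(\sigma\nabla u)=H-\nabla\cdot(\sigma F)$, deduce a uniform bound on $\|D^2G_i\|_{L^1}$, and combine it with the $L^1$ Gagliardo--Nirenberg inequality and $\|G_i\|_{L^1}\le\sigma_0^{-1}\|J-\tJ\|_{L^1}$. Your energy (Lax--Milgram) bound on $\|u\|_{L^2}$ and your explicit absorption of the linear term are more careful than the paper, which appeals to $u\in C^1(\bar\Omega)$ and leaves the absorption implicit. One caveat: your alternative route via $\|w\|_{L^2}^2\le\|w\|_{L^\infty}\|w\|_{L^1}$ only gives $\|\nabla G_i\|_{L^1}\lesssim\|J-\tJ\|_{L^1}^{1/4}$, so you need the $L^1$--$L^1$ interpolation to reach the exponent $1/2$.
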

{\bf Proof.}
Since $\Omega$ is a Lipschitz domain, then by the \textit{Gagliardo-Nirenberg interpolation inequality,}
    \begin{equation}\label{j1}
        \|\nabla G_i\|_{L^1(\Omega)}\leq C_1\left(\|D^2G_i\|_{L^1(\Omega)}^{1/2}\|G_i\|_{L^1(\Omega)}^{1/2}+\|G_i\|_{L^1(\Omega)}\right)
    \end{equation}
    for some $C_1$ depending on $\Omega.$    
    By \eqref{EL}, we have that $\nabla\cdot(\sigma(Du+F))=H.$ Equivalently, we have the elliptic PDE
    $$
        \nabla\cdot(\sigma Du)=H-\nabla\cdot(\sigma F).
    $$
    Note that $H-\nabla\cdot(\sigma F)\in H^1(\Omega).$ By elliptic regularity (Theorem 8.13 in \cite{GT}), $u,\tilde u\in H^3(\Omega),$ and
    $$
        \|u\|_{H^3(\Omega)}\leq C_2\left(\|u\|_{L^2(\Omega)}+\|H-\nabla\cdot(\sigma F)\|_{H^1(\Omega)}\right)
    $$
    where $C_2$ depends on $\sigma_0,\sigma_2,$ and $\partial\Omega.$ We have a similar bound for $\|\tu\|_{H^3\Omega}.$ Therefore, since $u\in C^1(\bar\Omega),$
    $$
        \|u\|_{H^3(\Omega)},\|\tu\|_{H^3(\Omega)}\leq C_3
    $$
    for some $C_3$ depending on $ k_2,\sigma_0,\sigma_2,$ and $\partial\Omega.$
    Denote $F=(F_1,F_2)$ if $n=2$ and $F=(F_1,F_2,F_3)$ if $n=3.$ Note that
    $$
        G_i=\tu_{x_i}+F_i-\frac{\sigma}{\tsigma}\left(u_{x_i}+F_i\right)
    $$
    for which it follows that
    \begin{equation}\label{j2}
        \|D^2G_i\|_{L^1(\Omega)}\leq|\Omega|^{1/2}\|D^2G_i\|_{L^2(\Omega)}\leq C_4
    \end{equation}
    for some constant $C_4$ depending on depending on $k_2, \sigma_0,\sigma_2,$ and $\Omega.$ Combining \eqref{j1} and $\eqref{j2}$ with
    $$
        \|G_i\|_{L^1(\Omega)}\leq\frac{\|J-\tJ\|_{L^1(\Omega)}}{\sigma_0},
    $$
    we obtain the desired result.
\hfill$\Box$

\bigskip

In order to prove that $u$ and $\tu$ are close in $W^{1,1}
(\Omega),$ we need
additional assumptions on the structure of level sets of $v.$

\begin{definition}\label{def H3}
Suppose $u$ is admissible and $v=u+f.$ Suppose $n=2,$ and $x\in\Omega.$ Choose $h\in S^1$ and $t\inR$ small enough so that $x+th\in\Omega.$ Let $\Gamma$ be the level set of $v$ containing $x$ and $\Gamma_t$ be the level set of $v$ containing $x+th.$ Let $\gamma,$ $\gamma_t$ be the parameterizations by arc length of $\Gamma$ and $\Gamma_t,$ respectively, where $\gamma(0),\gamma_t(0)\in\partial\Omega.$

Similarly, if $n=3,$ suppose that the level sets of $v$ can be foliated to one-dimensional curves as in Definition \ref{def H2}. Suppose $x\in\Omega$ and $h\in S^2.$ Choose $t$ small enough such that $x+th\in\Omega.$ Let $\Gamma$ and $\Gamma_t$ be the level sets of the form $\{v=\tau\}\cap\{g_\tau=r\}$ where $\tau,r\inR$ which contain $x$ and $x+th,$ respectively. Let $\gamma$ and $\gamma_t$ be the parameterizations by arc length of $\Gamma$ and $\Gamma_t,$ respectively where $\gamma(0),\gamma_t(0)\in\partial\Omega.$

We say that the level sets of $v$ are well structured if the following conditions are satisfied:
\begin{enumerate}[(a)]
    \item There exists $\tK\geq0$ such that
    \begin{equation}
        \left|\frac{\gamma_t'(s)-\gamma'(s)}{t}\right|\leq \tK
    \end{equation}
    for every $s\in[0,L],$ $t\inR,$ $x\in\Omega$ and $h\in S^{n-1}.$ In particular,
    \begin{equation}
        \gamma_t'(s)\to\gamma'(s)\text{ as }t\to0,
    \end{equation}
    where $\gamma'(s)=\frac{d\gamma}{ds}$ and $\gamma_t'(s)=\frac{d\gamma_t}{ds}.$
    \item There exists a bounded function $B_{x,h}(s)=B(x,h;s)\in L^{\infty}(\Omega\times S^{n-1}\times[0,K])$ such that
    \begin{equation}
        \lim\limits_{t\to0}\frac{\gamma_t(s)-\gamma(s)}{t}=B_{x,h}(s)
    \end{equation}
    for every $s\in[0,L],$ $x\in\Omega,$ and $h\in S^{n-1}.$
\end{enumerate}
\end{definition}

\begin{theorem}\label{thm H4}
    Let $n=2$ and suppose that $u$ and $\tu$ are admissible with $u|_{\partial\Omega}=\tu|_{\partial\Omega}=0.$ Suppose $F\in C^2(\bar\Omega),$ $H,\tH\in C^1(\bar\Omega),$ $\sigma,\tsigma\in C^2(\bar\Omega)$ and satisfy \eqref{FHineq} and \eqref{sigmaineq}. If the level sets of $v$ are well-structured in the sense of Definition \ref{def H3}, then
\begin{equation}
    \|\nabla u-\nabla\tu\|_{L^1(\Omega)}\leq C\|\,H-\tH\|_{L^\infty(\Omega)}^{1/4},
\end{equation}
for some constant $C=C(\Omega,m,M,k_1,k_2,K,\tK,\sigma_0,\sigma_1,\sigma_2)$ independent of $\tu$ and $\tsigma.$
\end{theorem}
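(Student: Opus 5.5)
We estimate $\nabla(v-\tv)=\nabla u-\nabla\tu$ by differentiating, along the level curves of $v$, the identity from the proof of Theorem \ref{thm H2}. For a level curve $\Gamma$ of $v$ through $x$, parametrized by arc length with $\gamma(0)\in\partial\Omega$, that proof gives
\begin{equation*}
v(\gamma(s))-\tv(\gamma(s))=\int_0^s \frac{J(\gamma(r))-\tJ(\gamma(r))}{\tsigma(\gamma(r))}\cdot\gamma'(r)\,dr=-\int_0^s G(\gamma(r))\cdot\gamma'(r)\,dr,
\end{equation*}
with $G$ as in \eqref{G}. Fix $h\in S^1$ and use the same formula on the level curve $\Gamma_t$ through $x+th$, parametrized by $\gamma_t$. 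Both points sit at (nearly) the same arc-length parameter $s_0$. Subtracting and dividing by $t$ represents the difference quotient of $w:=v-\tv$ in direction $h$ as
\begin{equation*}
-\int_0^{s_0}\left[\frac{G(\gamma_t(r))-G(\gamma(r))}{t}\cdot\gamma_t'(r)+G(\gamma(r))\cdot\frac{\gamma_t'(r)-\gamma'(r)}{t}\right]dr .
\end{equation*}
We then let $t\to0$. By Definition \ref{def H3}(b), the first term tends to $\int_0^{s_0}\nabla G(\gamma(r))B_{x,h}(r)\cdot\gamma'(r)\,dr$. This uses $G\in C^1$, which holds by the regularity in Lemma \ref{lemma H3} and $H^3\subset C^{1}$ in dimension $2$. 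By Definition \ref{def H3}(a), the second term is bounded by $\tK\int_{\Gamma}|G|\,dl$. This yields the pointwise bound
\begin{equation*}
|\partial_h w(x)|\le \|B\|_\infty\int_{\Gamma_x}|\nabla G|\,dl+\tK\int_{\Gamma_x}|G|\,dl ,
\end{equation*}
with the right-hand side independent of $x$ along $\Gamma_x$. Taking $h=e_1,e_2$ controls $|\nabla w(x)|$.

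Next we integrate over $\Omega$ using the coarea formula exactly as in Theorem \ref{thm H2}. Since $|\nabla v|\ge m/\sigma_1$, we have $\int_\Omega|\nabla w|\le \frac{\sigma_1}{m}\int_\R\int_{\{v=t\}}|\nabla w|\,dl\,dt$. On each component the integrand is bounded by a constant equal to the line integral above, and the length is at most $K$. Applying coarea backwards with $|\nabla v|\le M/\sigma_0$ then gives
\begin{equation*}
\|\nabla u-\nabla\tu\|_{L^1(\Omega)}\le C\big(\|\nabla G\|_{L^1(\Omega)}+\|G\|_{L^1(\Omega)}\big).
\end{equation*}
Lemma \ref{lemma H3} gives $\|\nabla G_i\|_{L^1}\le C\|J-\tJ\|_{L^1}^{1/2}$, and trivially $\|G\|_{L^1}\le\sigma_0^{-1}\|J-\tJ\|_{L^1}$, which is at most a constant times $\|J-\tJ\|_{L^1}^{1/2}$ since $|J-\tJ|\le 2M$. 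Theorem \ref{thm H1} gives $\|J-\tJ\|_{L^1}\le C\|H-\tH\|_\infty^{1/2}$, and combining the two yields the exponent $1/4$.

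The main obstacle is the differentiation step. We must justify that $x+th$ can be matched to the parameter $s_0$ on $\Gamma_t$ (or that the mismatch contributes $o(t)$, using $|\gamma_t'|=1$ and $|G|$ bounded), and that the limit may be passed under the integral. For the latter, I would use dominated convergence: by the mean value theorem the first integrand is bounded by $\|\nabla G\|_\infty$ times $|(\gamma_t-\gamma)/t|$, which is bounded by (a) after integrating $\gamma_t'-\gamma'$, since the starting points differ by $O(t)$. If exact matching fails, I would instead bound the difference quotient directly by these same two integrals plus $\|G\|_\infty\,|s_t-s_0|/t$, and absorb the last term using (a).
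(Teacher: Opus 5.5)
Your proposal follows the paper's proof essentially step for step: the same representation of $v-\tv$ by line integrals of $G$ along level curves starting on $\partial\Omega$, the same two-term splitting of the difference quotient controlled by Definition~\ref{def H3}(a),(b), the same coarea argument with the length bound $K$, and then Lemma~\ref{lemma H3} and Theorem~\ref{thm H1} to reach the exponent $1/4$ (the paper simply assumes the matching $\gamma_t(s_0)=x+th$ by ``reparametrizing'' $\gamma_t$, which is exactly the point you rightly flag). Two small repairs are needed: $H^3\subset C^1$ only gives $G\in C^0$ (and $H^3\not\subset C^2$ in two dimensions), so get $u,\tu\in C^{2,\alpha}(\bar\Omega)$, hence $G\in C^1(\bar\Omega)$, from Schauder estimates (using the $C^3$ boundary of Lemma~\ref{lemma H3}); and in your mismatch fallback the extra term tends to $-\beta\,G(x)\cdot\gamma'(s_0)$ with $\beta=\lim_{t\to0}(s_t-s_0)/t$, which is bounded (by $1+\|B\|_{L^\infty}$ via (b)) but need not vanish (for $h$ tangent to a straight level line, $s_t=s_0+t$), so bound it pointwise by $|\beta|\,|G(x)|$, whose integral is $O(\|J-\tJ\|_{L^1(\Omega)})$, and not by $\|G\|_{L^\infty}$, which neither (a) nor Theorem~\ref{thm H1} controls.
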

{\bf Proof.}
    Fix $x\in\Omega$ and $h\in S^1$ and define
    $$
        \CL(x,h)
        :=(\nabla\tv(x)-\nabla v(x))\cdot h
        =\lim\limits_{t\to0}\Frac{[\tv(x+th)-v(x+th)]-[\tv(x)-v(x)]}{t}.
    $$

    Since the level sets of $v$ reach the boundary $\partial\Omega,$ there exist $y,y_t\in\partial\Omega$ such that
    $$
        v(x)=v(y)=\tv(y)=f(y)
    $$
    and
    $$
        v(x+th)=v(y_t)=\tv(y_t)=f(y_t).
    $$
    Thus,
    $$
        [\tv(x+th)-v(x+th)]-[\tv(x)-v(x)]
        =[\tv(x+th)-\tv(y_t)]-[\tv(x)-\tv(y)].
    $$
    Consider $\gamma$ and $\gamma_t,$ curves passing through $x$ and $x+th,$ as in Definition \ref{def H3} with $\gamma(0)=y$ and $\gamma_t(0)=y_t.$ Suppose $\gamma(s_0)=x$ and $\gamma_t(s_0)=x+th$ (where $\gamma_t$ is reparametrized if necessary). Then
    \begin{align*}
        [\tv(x+th)-v(x+th)]-[\tv(x)-v(x)]
        &=[\tv(x+th)-\tv(y_t)]-[\tv(x)-\tv(y)]\nl
        &=[\tv(\gamma_t(s_0))-\tv(\gamma_t(0))]-[\tv(\gamma(s_0))-\tv(\gamma(0))]\nl
        &=\int_0^{s_0}\nabla\tv(\gamma_t(s))\cdot\gamma_t'(s)\,ds-\int_0^{s_0}\nabla\tv(\gamma(s))\cdot\gamma'(s)\,ds.
    \end{align*}
    Thus,
    $$
        \CL(x,h)=\lim\limits_{t\to0}\,\recip{t}\left(\int_0^{s_0}\nabla\tv(\gamma_t(s))\cdot\gamma_t'(s)\,ds-\int_0^{s_0}\nabla\tv(\gamma(s))\cdot\gamma'(s)\,ds\right).
    $$
    On $\Gamma,$ we have $\frac{d}{ds}(v(\gamma(s)))
    =\nabla v(\gamma(s))\cdot\gamma'(s)
    =0
    =\frac{\sigma(\gamma(s))}{\tsigma(\gamma(s))}\nabla v(\gamma(s))\cdot\gamma'(s)
    =\frac{J(\gamma(s))}{\tsigma(\gamma(s))}\cdot\gamma'(s).$ Thus, $\frac{J(\gamma(s))}{\tsigma(\gamma(s))}\cdot\gamma'(s)=0.$ Similarly, on $\Gamma_t,$ we have
    $
        \frac{J(\gamma_t(s))}{\tsigma(\gamma_t(s))}\cdot\gamma_t'(s)=0.
    $
    Therefore,
    \begin{align*}
        \CL(x,h)
        &=\lim\limits_{t\to0}\,\recip{t}\left(\int_0^{s_0}\frac{\tJ(\gamma_t(s))}{\tsigma(\gamma_t(s))}\cdot\gamma_t'(s)\,ds-\int_0^{s_0}\frac{\tJ(\gamma(s))}{\tsigma(\gamma(s))}\cdot\gamma'(s)\,ds\right)\nl
        &=\lim\limits_{t\to0}\,\recip{t}\left(\int_0^{s_0}\frac{\tJ(\gamma_t(s))-J(\gamma_t(s))}{\tsigma(\gamma_t(s))}\cdot\gamma_t'(s)\,ds-\int_0^{s_0}\frac{\tJ(\gamma(s))-J(\gamma(s))}{\tsigma(\gamma(s))}\cdot\gamma'(s)\,ds\right).
    \end{align*}
    Define
    $$
        G(x):=\frac{\tJ(x)-J(x)}{\tsigma(x)},\,x\in\Omega.
    $$
    Then
    $$
        \CL(x,h)
        =\lim\limits_{t\to0}\,\recip{t}\left(\int_0^{s_0}G(\gamma_t(s))\cdot\gamma_t'(s)\,ds-\int_0^{s_0}G(\gamma(s))\cdot\gamma'(s)\,ds\right)=\lim\limits_{t\to0}\mathcal{G}(t)
    $$
    where
    \begin{equation}\label{CG}
        \mathcal{G}(t):=\recip{t}\int_0^{s_0}\left(G(\gamma_t(s))-G(\gamma(s))\right)\cdot\gamma_t'(s)\,ds
        +\recip{t}\int_0^{s_0}G(\gamma(s))\cdot(\gamma_t'(s)-\gamma'(s))\,ds.
    \end{equation}
    By Definition \ref{def H3}, there exists a positive constant $\tK$ such that $\left|\Frac{\gamma_t'(s)-\gamma'(s)}{t}\right|\leq \tK.$ Thus we obtain the following bound for the second integral in \eqref{CG}:
    \begin{equation}\label{estG2}
        \left|\recip{t}\int_0^{s_0}G(\gamma(s))\cdot(\gamma_t'(s)-\gamma'(s))\,ds\right|\leq\frac{\tK}{\sigma_0}\int_0^L|\tJ(\gamma(s)-J(\gamma(s))|\,ds.
    \end{equation}
    Next, by Definition \ref{def H3},
    $$
        \lim\limits_{t\to0}\frac{\gamma_t(s)-\gamma(s)}{t}=B_{x,h}(s).
    $$
    If $G=(G_1,G_2),$ then for $i\in\{1,2\},$
    $$
        \lim\limits_{t\to0}\frac{G_i(\gamma_t(s))-G_i(\gamma(s))}{t}
        =\lim\limits_{t\to0}\frac{G_i(\gamma(s)+tB_{x,h}(s))-G_i(\gamma(s))}{t}
        =\nabla G_i(\gamma(s))\cdot B_{x,h}(s).
    $$
    Thus, we estimate the first integral in \eqref{CG} by
    \begin{align}
        &\lim\limits_{t\to\infty}\recip{t}\int_0^{s_0}\big(G(\gamma_t(s))-G(\gamma(s))\big)\cdot\gamma_t'(s)\,ds\nonumber\nl
        &=\int_0^{s_0}\big(\nabla G_1(\gamma(s))\cdot B_{x,h}(s), \nabla G_2(\gamma(s))\cdot B_{x,h}(s)\big)\cdot\gamma_t'(s)\,ds\nonumber\nl
        &\leq\|B\|_{L^\infty(\Omega)}\int_0^{s_0}\big(|\nabla G_1(\gamma(s)|+|\nabla G_2(\gamma(s)|\big)\,|\gamma'(s)|\,ds\nonumber\nl
        &\leq\|B\|_{L^\infty(\Omega)}\int_0^L\big(|\nabla G_1(\gamma(s)|+|\nabla G_2(\gamma(s)|\big)\,ds\label{estG1}
    \end{align}
    By \eqref{estG2} and \eqref{estG1}, we conclude that 
    \begin{align*}
        |\nabla\tv(x)-\nabla v(x)|
        \leq\sup\limits_{h\in S^1}\CL(x,h)
        &\leq\frac{\tK}{\sigma_0}\int_0^L|\tJ(\gamma(s)-J(\gamma(s))|\,ds\nl
        &\hspace{0.6in}+\|B\|_{L^\infty(\Omega)}\int_0^L\big(|\nabla G_1(\gamma(s))|+|\nabla G_2(\gamma(s))|\big)\,ds.
    \end{align*}
    Thus,
    $$
        \int_\Gamma|\nabla\tv-\nabla v|\,dl
        \leq\frac{K\tK}{\sigma_0}\int_\Gamma|J-\tJ|\,dl+K\|B\|_{L^\infty(\Omega)}\int_\Gamma\big(|\nabla G_1|+|\nabla G_2|\big)\,dl. 
    $$
    Therefore, for each $\tau\inR,$
    \begin{align}\label{J1}
    \begin{split}
        \int_{\{v=\tau\}\cap\Omega}|\nabla\tv-\nabla v|\,dl
        &\leq\frac{K\tK}{\sigma_0}\int_{\{v=\tau\}\cap\Omega}|J-\tJ|\,dl\nl
        &\hspace{0.6in}+K\|B\|_{L^\infty(\Omega)}\int_{\{v=\tau\}\cap\Omega}\big(|\nabla G_1|+|\nabla G_2|\big)\,dl.
    \end{split}
    \end{align}
    By \eqref{J1} and the coarea formula,
    \begin{align*}
        \frac{m}{\sigma_1}\|\nabla v-\nabla\tv\|_{L^1(\Omega)}
        &\leq \int_{\Omega}|\nabla v|\,|\nabla v-\nabla\tv|\,dx\nl
        &=\int_\R\int_{\{v=\tau\}\cap\Omega} |\nabla v-\nabla\tv|\,dl\,d\tau\nl
        &\leq\frac{K\tK}{\sigma_0}\int_\R\int_{\{v=\tau\}\cap\Omega}|J-\tJ|\,dl\,d\tau\nl
        &\hspace{0.6in}+K\|B\|_{L^\infty(\Omega)}\int_\R\int_{\{v=\tau\}\cap\Omega}\big(|\nabla G_1|+|\nabla G_2|\big)\,dl\,d\tau\nl
        &\leq\frac{K\tK M}{\sigma_0^2}\int_\R\int_{\{v=\tau\}\cap\Omega}\frac{|J-\tJ|}{|\nabla v|}\,dl\,d\tau\nl
        &\hspace{0.8in}+\frac{KM\|B\|_{L^\infty(\Omega)}}{\sigma_0}\int_\R\int_{\{v=\tau\}\cap\Omega}\frac{|\nabla G_1|+|\nabla G_2|}{|\nabla v|}\,dl\,d\tau\nl
        &=\frac{K\tK M}{\sigma_0^2}\int_\Omega|J-\tJ|\,dx
        +\frac{KM\|B\|_{L^\infty(\Omega)}}{\sigma_0}\int_\Omega \big(|\nabla G_1|+|\nabla G_2|\big)\,dx\nl
        &\leq\frac{K\tK M}{\sigma_0^2}\|J-\tJ\|_{L^1(\Omega)}+\frac{KM\|B\|_{L^\infty(\Omega)}}{\sigma_0}\cdot2C_1\|J-\tJ\|_{L^1(\Omega)}^{1/2}\nl
        &\leq\left(\frac{K\tK M}{\sigma_0^2}(2M|\Omega|)^{1/2}+\frac{KM\|B\|_{L^\infty(\Omega)}}{\sigma_0}\cdot2C_1\right)\|J-\tJ\|_{L^1(\Omega)}^{1/2}
    \end{align*}
    where $C_1=C_1\left(\Omega,k_2, \sigma_0,\sigma_2\right)$ is the constant obtained from Lemma \ref{lemma H3}. By Theorem \ref{thm H1}, we obtain the desired result.
    \hfill$\Box$

    \begin{theorem}\label{thm H5}
    Let $n=3$ and suppose that $u$ and $\tu$ are admissible with $u|_{\partial\Omega}=\tu|_{\partial\Omega}=0.$ Suppose $F\in C^2(\bar\Omega),$ $H,\tH\in C^1(\bar\Omega),$ $\sigma,\tsigma\in C^2(\bar\Omega)$ and satisfy \eqref{FHineq} and \eqref{sigmaineq}. If the level sets of $v$ can be foliated to one-dimensional curves in the sense of Definition \ref{def H2} and the level sets of $v$ are well-structured in the sense of Definition \ref{def H3}, then
\begin{equation}
    \|\nabla u-\nabla\tu\|_{L^1(\Omega)}\leq C\|\,H-\tH\|_{L^\infty(\Omega)}^{1/4},
\end{equation}
for some constant $C=C(\Omega,m,M,k_1,k_2,K,\tK,\sigma_0,\sigma_1,\sigma_2)$ independent of $\tu$ and $\tsigma.$
\end{theorem}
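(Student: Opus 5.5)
The proof will follow that of Theorem \ref{thm H4}, with the two-dimensional coarea decomposition replaced by the iterated decomposition used in Theorem \ref{thm H3}.

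\emph{Pointwise bound.} Fix $x\in\Omega$ and $h\in S^2$, and again set $\CL(x,h)=(\nabla\tv(x)-\nabla v(x))\cdot h$. Let $\Gamma$ and $\Gamma_t$ be the curves of the form $\{v=\tau\}\cap\{g_\tau=r\}$ through $x$ and $x+th$, as in Definition \ref{def H3}. By Definition \ref{def H2} these curves reach $\partial\Omega$ at points $y$ and $y_t$, where $v=\tv=f$. Hence $[\tv-v](x+th)-[\tv-v](x)$ equals the difference of the line integrals of $\nabla\tv$ along $\gamma_t$ and $\gamma$ over $[0,s_0]$.

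Since $\gamma,\gamma_t$ lie in level sets of $v$, we have $J\cdot\gamma'=0$ and $J\cdot\gamma_t'=0$. So $\nabla\tv$ can be replaced by $G=(\tJ-J)/\tsigma$, exactly as before. Splitting as in \eqref{CG} and using Definition \ref{def H3}(a),(b), I expect
$$|\nabla\tv(x)-\nabla v(x)|\leq\frac{\tK}{\sigma_0}\int_\Gamma|J-\tJ|\,dl+\|B\|_{L^\infty}\int_\Gamma\sum_{i=1}^3|\nabla G_i|\,dl.$$
The only change from $n=2$ is that the chain rule now gives three components $\nabla G_i\cdot B_{x,h}$. The right side depends only on $\Gamma$, and $\Gamma$ has length at most $K$. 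Integrating over $\Gamma$ therefore gives
$$\int_\Gamma|\nabla\tv-\nabla v|\,dl\leq K\cdot\Big(\text{the same right side}\Big).$$

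\emph{Integration.} Next I integrate in $r$ and $\tau$ and use the two coarea formulas, as in \eqref{I1}--\eqref{I4}. First,
$$\frac{m}{\sigma_1}\|\nabla v-\nabla\tv\|_{L^1}\le\int_\R\int_{\{v=\tau\}}|\nabla v-\nabla \tv|\,dS\,d\tau .$$
Then
$$\int_{\{v=\tau\}}|\nabla v-\nabla\tv|\,dS\leq\frac1{c_g}\int_\R\int_{\{v=\tau\}\cap\{g_\tau=r\}}|\nabla v-\nabla\tv|\,dl\,dr .$$
Apply the curve estimate on each component. Going back to volume integrals costs a factor $C_g$ from $|\nabla g_\tau|\le C_g$ and a factor $M/\sigma_0$ from $|\nabla v|\le M/\sigma_0$. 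This yields
$$\|\nabla v-\nabla\tv\|_{L^1}\le C\Big(\|J-\tJ\|_{L^1}+\sum_{i=1}^3\|\nabla G_i\|_{L^1}\Big).$$

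\emph{Conclusion.} Lemma \ref{lemma H3} holds for $n=3$ and bounds each $\|\nabla G_i\|_{L^1}$ by $C\|J-\tJ\|_{L^1}^{1/2}$. Since $|J|,|\tJ|\le M$, we have $\|J-\tJ\|_{L^1}\le 2M|\Omega|$, so $\|J-\tJ\|_{L^1}\le (2M|\Omega|)^{1/2}\|J-\tJ\|_{L^1}^{1/2}$. Theorem \ref{thm H1} then gives the exponent $1/4$. Finally, $\nabla u-\nabla\tu=\nabla v-\nabla\tv$.

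\emph{Main obstacle.} The delicate step is the pointwise bound. Definition \ref{def H3} has to be applied to the foliating curves rather than to the level sets, and $\gamma_t$ must be reparametrized so that $\gamma_t(s_0)=x+th$ with the same $s_0$. Moreover, $\Gamma_t$ lies in a possibly different level set $\{v=\tau'\}$ with a different foliation function $g_{\tau'}$. The first thing I will do is check that $J\cdot\gamma_t'=0$ still holds there. It does, since $\gamma_t$ lies in a level set of $v$; this is all the argument needs.
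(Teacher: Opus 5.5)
Your proposal is correct and is essentially the paper's proof. The paper likewise carries the curve-wise estimate of Theorem \ref{thm H4} over to the foliating curves $V_{\tau,r}=\{v=\tau\}\cap\{g_\tau=r\}\cap\Omega$, now with the three terms $|\nabla G_i|$. It then passes through the two coarea formulas at the cost of the factors $1/c_g$, $C_g$ and $M/\sigma_0$, and finishes with Lemma \ref{lemma H3}, the bound $\|J-\tilde J\|_{L^1}\le(2M|\Omega|)^{1/2}\|J-\tilde J\|_{L^1}^{1/2}$, and Theorem \ref{thm H1}. Note that, in your argument as in the paper's, the resulting constant also depends on $c_g$, $C_g$ and $\|B\|_{L^\infty}$; the statement's list of dependencies omits these.
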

{\bf Proof.}
    Similar to the proof of Theorem \ref{thm H4}, we can conclude that
    \begin{equation}\label{J2}
        \int_{V_{\tau,r}}|\nabla v-\nabla\tv|\,dl\leq \frac{K\tK}{\sigma_0}\int_{V_{\tau,r}}|J-\tJ|\,dl+K\|B\|_{L^\infty(\Omega)}\int_{V_{\tau,r}}\big(|G_1|+|G_2|+|G_3|\big)\,dl
    \end{equation}
    where $V_{\tau,r}:=\{v=\tau\}\cap\{g_\tau=r\}\cap\Omega$ (see Definiton \ref{def H2}) and $G=(G_1,G_2,G_3)$ is defined in \eqref{G}. It follows from \eqref{J2} and the coarea formula that
    \allowdisplaybreaks
    \begin{align*}
        \frac{m}{\sigma_1}\|\nabla v-\nabla\tv\|_{L^1(\Omega)}
        &\leq\int_\Omega|\nabla v|\,|\nabla v-\nabla\tv|\,dx\nl
        &=\int_\R\int_{\{v=\tau\}\cap\Omega}|\nabla v-\nabla\tv|\,dS\,d\tau\nl
        &\leq\recip{c_g}\int_\R\int_{\{v=\tau\}\cap\Omega}|\nabla g_\tau|\,|\nabla v-\nabla\tv|\,dS\,d\tau\nl
        &=\recip{c_g}\int_\R\int_\R\int_{V_{\tau,r}}|\nabla v-\nabla\tv|\,dl\,dr\,d\tau\nl
        &\leq\frac{K\tK}{\sigma_0c_g}\int_\R\int_\R\int_{V_{\tau,r}}|J-\tJ|\,dl\,dr\,d\tau\nl
        &\hspace{0.44in}+\frac{K\|B\|_{L^\infty(\Omega)}}{c_g}\int_\R\int_\R\int_{V_{\tau,r}}\big(|\nabla G_1|+|\nabla G_2|+|\nabla G_3|\big)\,dl\,dr\,d\tau\nl
        &\leq\frac{K\tK}{\sigma_0c_g}\cdot C_g\cdot\frac{M}{\sigma_0}\int_\R\int_\R\int_{V_{\tau,r}}\frac{|J-\tJ|}{|\nabla g_\tau|\,|\nabla v|}\,dl\,dr\,d\tau\nl
        &\hspace{0.44in}+\frac{K\|B\|_{L^\infty(\Omega)}}{c_g}\cdot C_g\cdot\frac{M}{\sigma_0}\int_\R\int_\R\int_{V_{\tau,r}}\frac{|\nabla G_1|+|\nabla G_2|+|\nabla G_3|}{|\nabla g_\tau|\,|\nabla v|}\,dl\,dr\,d\tau\nl
        &=\frac{K\tK MC_g}{\sigma_0^2c_g}\int_\R\int_{\{v=\tau\}\cap\Omega}\frac{|J-\tJ|}{|\nabla v|}\,dS\,d\tau\nl
        &\hspace{0.44in}+\frac{KMC_g\|B\|_{L^\infty(\Omega)}}{\sigma_0c_g}\int_\R\int_{\{v=\tau\}\cap\Omega}\frac{|\nabla G_1|+|\nabla G_2|+|\nabla G_3|}{|\nabla v|}\,dS\,d\tau\nl
        &=\frac{K\tK MC_g}{\sigma_0^2c_g}\int_\Omega|J-\tJ|\,dx\nl
        &\hspace{0.44in}+\frac{KMC_g\|B\|_{L^\infty(\Omega)}}{\sigma_0c_g}\int_\Omega\big(|\nabla G_1|+|\nabla G_2|+|\nabla G_3|\big)\,dx\nl
        &\leq\frac{K\tK MC_g}{\sigma_0^2c_g}\|J-\tJ\|_{L^1(\Omega)}+\frac{KMC_g\|B\|_{L^\infty(\Omega)}}{\sigma_0c_g}\cdot3C_1\|J-\tJ\|_{L^1(\Omega)}^{1/2}\nl
        &\leq\left(\frac{K\tK MC_g}{\sigma_0^2c_g}(2M|\Omega|)^{1/2}+\frac{KMC_g\|B\|_{L^\infty(\Omega)}}{\sigma_0c_g}\cdot 3C_1\right)\|J-\tJ\|_{L^1(\Omega)}^{1/2}
    \end{align*}
    where $C_1=C_1\left(\Omega,k_2,\sigma_0,\sigma_2\right)$ is the constant obtained from Lemma \ref{lemma H3}. By Theorem \ref{thm H1}, we obtain the desired result.
    \hfill$\Box$

\begin{theorem}\label{thm H6}
Let $n=2$ and suppose that $u$ and $\tu$ are admissible with $u|_{\partial\Omega}=\tu|_{\partial\Omega}=0.$ Suppose $F\in C^2(\bar\Omega),$ $H,\tH\in C^1(\bar\Omega),$ $\sigma,\tsigma\in C^2(\bar\Omega)$ and satisfy \eqref{FHineq} and \eqref{sigmaineq}. If the level sets of $v$ are well-structured in the sense of Definition \ref{def H3} then
\begin{equation}
    \|\sigma-\tsigma\|_{L^1(\Omega)}\leq C\|\,H-\tH\|_{L^\infty(\Omega)}^{1/4},
\end{equation}
for some constant $C=C(\Omega,m,M,k_1,k_2,K,\tK,\sigma_0,\sigma_1,\sigma_2)$ independent of $\tsigma.$
\end{theorem}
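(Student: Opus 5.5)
We will derive the bound on $\sigma-\tsigma$ from the two stability results already proved: Theorem \ref{thm H1}, which controls $J-\tJ$, and Theorem \ref{thm H4}, which controls $\nabla u-\nabla\tu=\nabla v-\nabla\tv$. The starting point is the pointwise identity $|J|=\sigma|\nabla v|$ and $|\tJ|=\tsigma|\nabla\tv|$. Admissibility gives $m\leq|J|,|\tJ|\leq M$, so $\sigma=|J|/|\nabla v|$ and $\tsigma=|\tJ|/|\nabla\tv|$ everywhere in $\Omega$. Moreover $|\nabla v|\geq m/\sigma_1$ and $|\nabla\tv|\geq m/\sigma_1$.

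Write
$$
\sigma-\tsigma=\frac{|J|-|\tJ|}{|\nabla v|}+|\tJ|\left(\frac{1}{|\nabla v|}-\frac{1}{|\nabla\tv|}\right)
=\frac{|J|-|\tJ|}{|\nabla v|}+\tsigma\,\frac{|\nabla\tv|-|\nabla v|}{|\nabla v|}.
$$
Using $|\nabla v|\geq m/\sigma_1$ and $\tsigma\leq\sigma_1$, we get pointwise
$$
|\sigma-\tsigma|\leq\frac{\sigma_1}{m}\big|J-\tJ\big|+\frac{\sigma_1^2}{m}\big|\nabla v-\nabla\tv\big|.
$$
Integrating over $\Omega$ gives
$$
\|\sigma-\tsigma\|_{L^1(\Omega)}\leq\frac{\sigma_1}{m}\|J-\tJ\|_{L^1(\Omega)}+\frac{\sigma_1^2}{m}\|\nabla u-\nabla\tu\|_{L^1(\Omega)}.
$$

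We then bound the two terms. By Theorem \ref{thm H1}, the first term is at most $C\|H-\tH\|_{L^\infty}^{1/2}$. The hypotheses of the present statement coincide with those of Theorem \ref{thm H4}, so the second term is at most $C\|H-\tH\|_{L^\infty}^{1/4}$.

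Finally, the exponent $1/2$ must be converted to $1/4$. We use $\|H-\tH\|_{L^\infty}\leq\|H\|_{L^\infty}+\|\tH\|_{L^\infty}\leq 2k_2$, which gives $\|H-\tH\|^{1/2}\leq(2k_2)^{1/4}\|H-\tH\|^{1/4}$. Alternatively one can use $\|J-\tJ\|_{L^1}\leq 2M|\Omega|$, as in the last step of Theorem \ref{thm H4}. The resulting constant depends only on the listed parameters. In particular it does not depend on $\tsigma$, since only the uniform bounds $\sigma_0,\sigma_1,\sigma_2$ are used.

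The main obstacle is already handled by Theorem \ref{thm H4}, namely controlling the gradients. What remains is to justify dividing by $|\nabla v|$ and $|\nabla\tv|$ everywhere, which follows from admissibility of both $u$ and $\tu$, and to check that the constants from Theorems \ref{thm H1} and \ref{thm H4} are uniform in $\tsigma$.
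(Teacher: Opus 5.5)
Your argument is correct and is essentially the paper's: write $\sigma=|J|/|\nabla v|$ and $\tsigma=|\tJ|/|\nabla\tv|$, use the admissibility lower bound $|\nabla v|,|\nabla\tv|\geq m/\sigma_1$, and reduce to Theorem \ref{thm H4}. The only difference is that the paper uses $|J|=|\tJ|=a$, since the same weight $a$ appears in both problems, so your first term $(|J|-|\tJ|)/|\nabla v|$ vanishes identically. The paper therefore does not need Theorem \ref{thm H1} or the $1/2\to 1/4$ exponent conversion, and obtains directly $\|\sigma-\tsigma\|_{L^1(\Omega)}\leq \frac{M\sigma_1^2}{m^2}\|\nabla v-\nabla\tv\|_{L^1(\Omega)}$.
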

{\bf Proof.}
    \begin{align*}
        \Int_\Omega|\sigma-\tsigma|\,dx
        =\Int_\Omega\left|\Frac{a}{|\nabla v|}-\Frac{a}{|\nabla \tv|}\right|\,dx
        =\Int_\Omega a\Frac{||\nabla v|-|\nabla\tv||}{|\nabla v||\nabla\tv|}\,dx\leq\Frac{M\sigma_1^2}{m^2}\Int_\Omega|\nabla v-\nabla\tv|\,dx
    \end{align*}
    and we apply Theorem \ref{thm H4}.
\hfill$\Box$

\bigskip

Lastly, the following theorem follows from Theorem \ref{thm H5} and a calculation similar to the proof of Theorem \ref{thm H6}.

\begin{theorem}
Let $n=3$ and suppose that $u$ and $\tu$ are admissible with $u|_{\partial\Omega}=\tu|_{\partial\Omega}=0.$ Suppose $F\in C^2(\bar\Omega),$ $H,\tH\in C^1(\bar\Omega),$ $\sigma,\tsigma\in C^2(\bar\Omega)$ and satisfy \eqref{FHineq} and \eqref{sigmaineq}. If the level sets of $v$ can be foliated to one-dimensional curves in the sense of Definition \ref{def H2}, and the level sets of $v$ are well-structured in the sense of Definition \ref{def H3}, then
\begin{equation}
    \|\sigma-\tsigma\|_{L^1(\Omega)}\leq C\|\,H-\tH\|_{L^\infty(\Omega)}^{1/4},
\end{equation}
for some constant $C=C(\Omega,m,M,k_1,k_2,K,\tK,\sigma_0,\sigma_1,\sigma_2)$ independent of $\tsigma.$
\end{theorem}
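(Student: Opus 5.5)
The plan is to repeat the pointwise computation in the proof of Theorem \ref{thm H6} verbatim, since nothing in it depends on the dimension, and then invoke Theorem \ref{thm H5} in place of Theorem \ref{thm H4}.

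\emph{Step 1: express $\sigma$ through $v$.} Admissibility of $u$ gives $|J|=\sigma|\nabla u+F|=\sigma|\nabla v|$. Since $|J|=a$ and $|\nabla v|\geq m/\sigma_1>0$ in $\Omega$, this identity holds everywhere, so
$$\sigma=\frac{a}{|\nabla v|} \quad\text{and}\quad \tsigma=\frac{a}{|\nabla\tv|}\quad\text{in } \Omega.$$

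\emph{Step 2: pointwise bound.} Using $a\leq M$, the lower bounds $|\nabla v|,|\nabla\tv|\geq m/\sigma_1$, and the reverse triangle inequality, we get
$$|\sigma-\tsigma|=a\,\frac{\big||\nabla v|-|\nabla\tv|\big|}{|\nabla v|\,|\nabla\tv|}\leq\frac{M\sigma_1^2}{m^2}\,|\nabla v-\nabla\tv|.$$

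\emph{Step 3: integrate and apply Theorem \ref{thm H5}.} Integrating over $\Omega$ and using $\nabla v-\nabla\tv=\nabla u-\nabla\tu$ yields
$$\|\sigma-\tsigma\|_{L^1(\Omega)}\leq\frac{M\sigma_1^2}{m^2}\,\|\nabla u-\nabla\tu\|_{L^1(\Omega)}.$$
The hypotheses here (admissibility, zero boundary values, the regularity bounds \eqref{FHineq} and \eqref{sigmaineq}, the foliation of Definition \ref{def H2}, and well-structuredness in the sense of Definition \ref{def H3}) are exactly those of Theorem \ref{thm H5}. That theorem bounds the right-hand side by $C\|H-\tH\|_{L^\infty(\Omega)}^{1/4}$, which gives the claim.

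\emph{Main obstacle: bookkeeping of constants.} The only point needing care is the dependence of the final constant. Theorem \ref{thm H5} also depends on $c_g$, $C_g$ and $\|B\|_{L^\infty}$ from Definitions \ref{def H2} and \ref{def H3}, and these should be regarded as absorbed into $C$. The extra factor $M\sigma_1^2/m^2$ introduces no new parameters.
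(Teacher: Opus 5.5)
Your proposal is correct and is the paper's own argument: the pointwise bound $|\sigma-\tsigma|\le \frac{M\sigma_1^2}{m^2}|\nabla v-\nabla\tv|$ from the proof of Theorem \ref{thm H6}, integrated over $\Omega$ and combined with Theorem \ref{thm H5}. Your remark is also accurate: the constant additionally depends on $c_g$, $C_g$ and $\|B\|_{L^\infty}$, and the paper's statement absorbs these without listing them.
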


\subsection{Numerical Simulation}

It was proven in \cite{CMO} that the following alternating split Bregman algorithm converges weakly to a minimizer, $u,$ of \eqref{mainmin}.

\vspace{0.25in}

\textbf{Algorithm 1}

\vspace{0.3in}

Let $\lambda>0.$ Let $H\in L^\infty(\Omega)$ and initialize $b^0,d^0\in\left(L^2(\Omega)\right)^n.$ For $k\geq0,$
\begin{enumerate}
    \item Solve
    \begin{equation*}\label{Poisson}
        \Delta u^{k+1}=-\nabla\cdot\left(b^k-d^k\right)+\recip{\lambda}H,\enspace u^{k+1}\big|_{\partial\Omega}=0.
    \end{equation*}
    \item Compute
    $$
        d^{k+1}:=
        \begin{cases}
            \max\left\{\left|b^k+\nabla u^{k+1}+F\right|-\frac{a}{\lambda},0\right\}\frac{b^k+\nabla u^{k+1}+F}{\left|b^k+\nabla u^{k+1}+F\right|}-F &\text{if }\left|b^k+\nabla u^{k+1}+F\right|\neq0\\
            -F &\text{if }\left|b^k+\nabla u^{k+1}+F\right|=0
        \end{cases}.
    $$
    \item Let
    $$
        b^{k+1}:=b^k+\nabla u^{k+1}-d^{k+1}.
    $$
\end{enumerate}

Weitao Chen \cite{CMO} wrote MATLAB code to run numerical simulations demonstrating the convergence of Algorithm 1. We use a slightly modified version of the code to run the following simulation. We construct an example with a known minimizer and compare the approximate solution with the known exact solution. The following data gives the numerical errors of numerical simulations with Algorithm 1 (with $\lambda =1$) for mesh size $h=1/100.$ The iterations are stopped when 
\[|u^{k+1}-u^k|/|u^{k+1}|<1\times10^{-7}.\]

We examine the effect of noise in our simulation. The noise model we used is a simple stochastic model $\tH = H + \gamma * R,$ where $R$ is normally distributed pseudo-random matrix of the order as $H$ with mean zero and standard deviation of one, and $\gamma > 0$ is the model standard deviation chosen as $\gamma = \delta * ||H||/||R||,$ where $\delta$
is the noise level. Replacing $H$ with $\tH$ in Algorithm 1, we obtain the following approximations of $\tu$ and compare them with $u.$

\begin{example}
    Let $\Omega=(0,1)\times(0,1)\subset\R^2$ and $u:=xy(1-x)(1-y)$ so that $u|_{\partial\Omega}=0.$ Let $F:=-\nabla u+(1,x+y)$ so that $\nabla u+F=(1,x+y)$ and let $a:=|\nabla u+F|=\sqrt{1+(x+y)^2}.$ Let $H:=\nabla\cdot(1,x+y)=1$ so that \eqref{EL} holds. 
\end{example}

\begin{table}[ht]
\centering
    
    \label{tab:exampleH1}
\begin{tabular}{ | m{3.8cm}| m{5cm} | m{3.85cm} |} 
  \hline
  \vspace{0.1cm}
  Low Noise $(\delta =0.01)$ & Moderate Noise $(\delta=0.035)$ & High Noise $(\delta =0.06)$ \\ 
  \hline
  \vspace{0.1cm}
  $7.5368\times 10^{-4}$ & $0.0027$ & $0.0050$\\ 
  \hline
  \end{tabular}
  \caption{Relative $L^2$ errors for Algorithm 1 with $h=1/100$ and $Tol=1\times10^{-7}$ and increasing noise levels.}\vspace{0.1in}
  \end{table}

\begin{center}
    \includegraphics[width=4.2in]{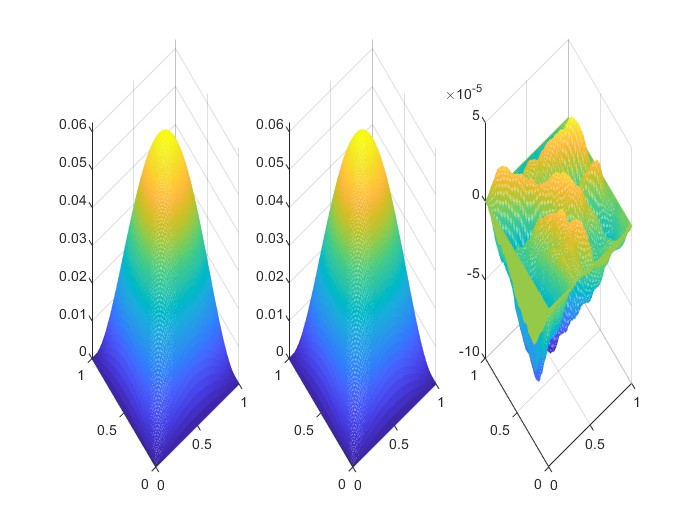}
        \begin{minipage}{15cm}

        \small  Figure 1: Numerical approximation $\tilde{u}^{313}$ (Left), Exact $u$ (Middle), and Error\\ $\tilde{u}^{313}-u$ (Right) with low noise. Maximum error: $8.0267\times10^{-5}.$

    \end{minipage}
\end{center}

\begin{center}
    \includegraphics[width=4.2in]{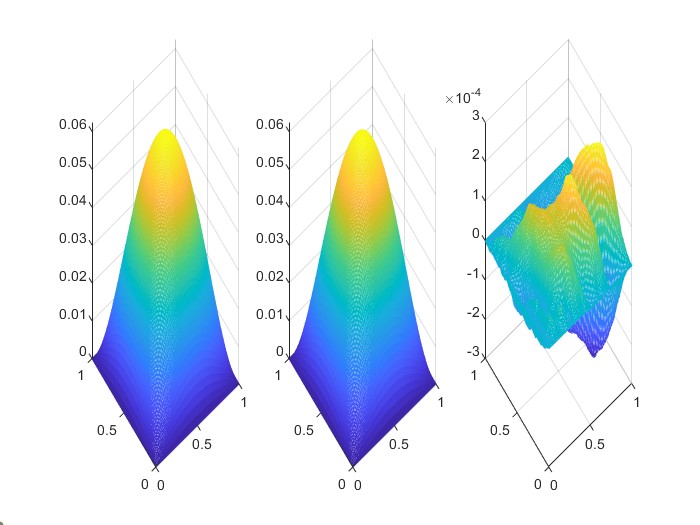}
        \begin{minipage}{15cm}

        \small  Figure 2: Numerical approximation $\tilde{u}^{324}$ (Left), Exact $u$ (Middle), and Error\\ $\tilde{u}^{324}-u$ (Right) with moderate noise. Maximum error: $2.8077\times10^{-4}.$

    \end{minipage}
\end{center}

\begin{center}
    \includegraphics[width=4.2in]{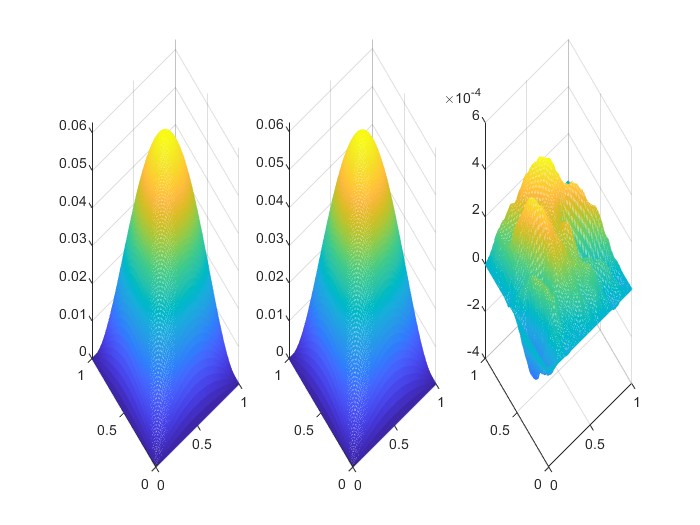}
        \begin{minipage}{15cm}

        \small  Figure 3: Numerical approximation $\tilde{u}^{323}$ (Left), Exact $u$ (Middle), and Error\\ $\tilde{u}^{323}-u$ (Right) with high noise. Maximum error: $4.6268\times10^{-4}.$

    \end{minipage}
\end{center}

\begin{center}
    \includegraphics[width=4.2in]{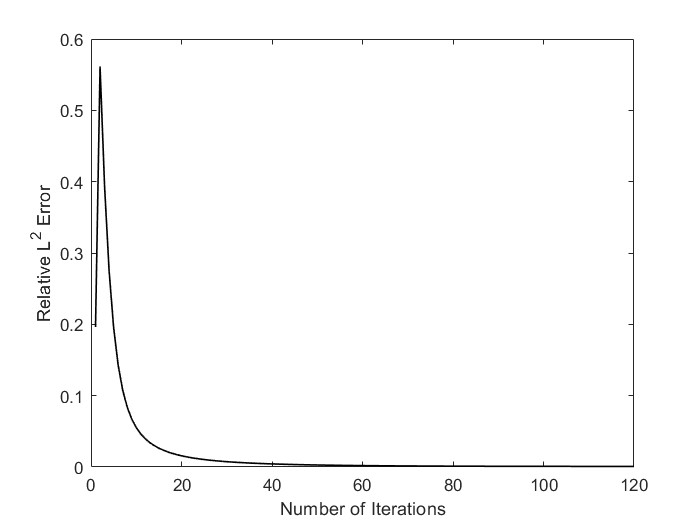}
        \begin{minipage}{15cm}

        \small  Figure 4: Rate of convergence for Algorithm 1 with low noise.

    \end{minipage}
\end{center}

\begin{center}
    \includegraphics[width=4.2in]{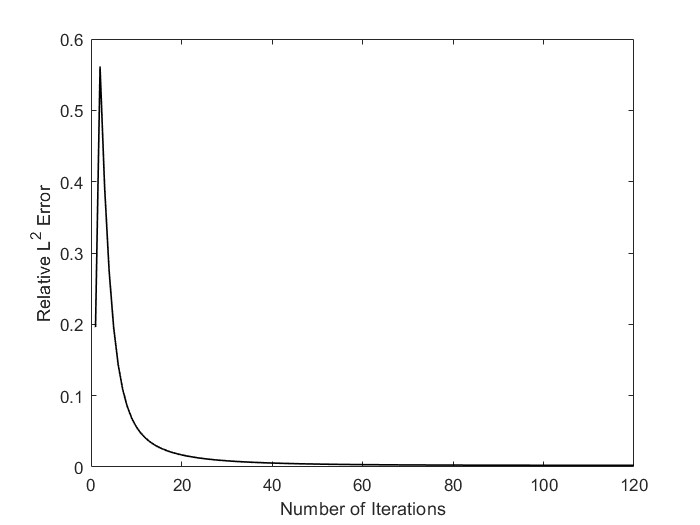}
        \begin{minipage}{15cm}

        \small  Figure 5: Rate of convergence for Algorithm 1 with moderate noise.

    \end{minipage}
\end{center}

\begin{center}
    \includegraphics[width=4.2in]{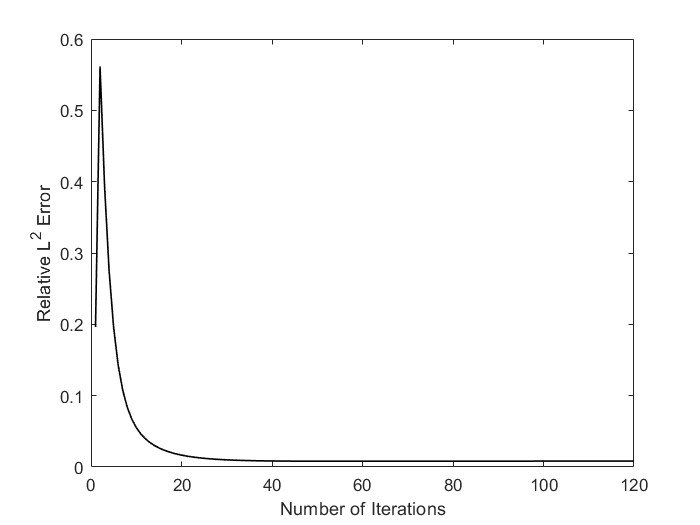}
        \begin{minipage}{15cm}

        \small  Figure 6: Rate of convergence for Algorithm 1 with high noise.

    \end{minipage}
\end{center}

\newpage

\textbf{Data availability.} This study is theoretical and includes numerical simulations. No external datasets were used. The data generated during the simulations are available from the authors upon reasonable request.

\end{document}